\documentclass[intlimits]{amsart}

\usepackage[all,hyperref]{bi-discrete} 
\usepackage[backend=biber,maxbibnames=5,maxalphanames=5,style=alphabetic,bibencoding=utf8,giveninits,url=false,isbn=false]{biblatex}
\AtBeginBibliography{\small}

\newtheorem{axiom}[theorem]{Axiom}

\providecommand{\dx}{\, \mathrm{d}x}
\providecommand{\dy}{\, \mathrm{d}y}

\providecommand{\tria}{\mathcal{T}}

\providecommand{\meshes}{\mathbb{T}}
\providecommand{\osc}{{\ensuremath{\mathrm{osc}}}}
\providecommand{\err}{{\ensuremath{\mathrm{err}}}}
\providecommand{\CBDD}{C_{\textsc{\scriptsize BDD}}}

\begin{document} 
	
\title[Optimal AFEM convergence for any bulk parameter]{Optimal AFEM convergence for any bulk parameter smaller than one}

\author[L.\ Diening]{Lars Diening}
\author[J.\ Storn]{Johannes Storn}

\address[L.\ Diening]{Department of Mathematics, Bielefeld University, Postfach 10 01 31, 33501 Bielefeld, Germany}
\email{lars.diening@uni-bielefeld.de}
\address[J.\ Storn]{Faculty of Mathematics \& Computer Science, Institute of Mathematics, Leipzig University, Augustusplatz 10, 04109 Leipzig, Germany}
\email{johannes.storn@uni-leipzig.de}

\keywords{AFEM, optimal convergence, bulk parameter, Ritz projection}

\subjclass[2020]{65N30,  
65N15,  
65N12,  
41A25
}


\begin{abstract}
  We prove rate-optimal convergence of the standard adaptive finite element method with conforming Lagrange elements for the Poisson problem for D\"orfler marking with any bulk parameter $\theta\in(0,1)$. This removes the smallness restriction on the bulk parameter that has persisted in the classical theory of optimal adaptive finite element methods.
\end{abstract}

\thanks{This work was funded by the Deutsche Forschungsgemeinschaft (DFG, German Research Foundation) – SFB 1283/2 2021 – 317210226.}

\maketitle


\section{Introduction}
\label{sec:introduction}

By combining a posteriori error estimates with local mesh refinement, adaptive finite element methods automatically resolve singularities and localized solution features, which would otherwise require prohibitively fine uniform meshes or a priori knowledge of the solution's singularities. However, their convergence analysis is intricate.  Early convergence results include the one-dimensional work of Babuška and Vogelius \cite{BabuskaVogelius1984}, followed in several dimensions by \cite{Doerfler1996,MorinNochettoSiebert2000,MorinNochetteSiebert02}. Mekchay and Nochetto \cite{MekchayNochetto2005} proved contraction of the sum of the energy error and data oscillation, which became an important tool in subsequent AFEM convergence and optimality analyses.  By combining the AFEM loop with a coarsening routine, Binev, Dahmen, and DeVore proved rate-optimality in \cite{BinevDahmenDeVore2004}. Stevenson subsequently established rate-optimality without coarsening in his seminal contribution \cite{Stevenson2007}, under an interior-node property of the refinement.
Building on this line of analysis, Diening and Kreuzer \cite{DieningKreuzer08} proved contraction for the $p$-Laplace problem, including the linear case $p=2$, and thereby, for the first time, dispensed entirely with separate marking for data oscillation.  Cascón, Kreuzer, Nochetto, and Siebert \cite{CasconKreuzerNochettoSiebert2008} further developed these ideas for general symmetric second-order elliptic problems and established rate-optimality with minimal refinement, thereby removing the interior-node property. These results were subsequently extended and generalized in various directions, cf.~\cite{CasconNochetto2012,KreuzerSiebert2011,BelenkiDieningKreuzer2012,BonitoNochetto2010,CarstensenPeterseimRabus2013,FeischlFuehrerPraetorius2014}. The different ingredients of these rate-optimality analyses were later organized and generalized in \cite{CarstensenFeischlPagePraetorius2014} into the four ``axioms of adaptivity''. This framework served as a foundation for further generalizations, including separate marking strategies \cite{CarstensenRabus2017}, saddle-point problems \cite{Feischl2022}, and optimal complexity results including inexact solves \cite{BringmannFeischlMiraciPraetoriusStreitberger2025,BringmannBrunnerPraetoriusStreitberger2025}. Further references can be found in books and overview articles such as \cite{Verfuerth1996,NochettoSiebertVeeser2009,BonitoCanutoNochetteVeeser24}.

Despite this extensive development, a restriction of the classical theory of rate-optimal adaptive finite element methods has remained: the proofs of rate-optimality require sufficiently small bulk parameters $\theta \ll 1$ in the D\"orfler marking routine, see Definition~\ref{def:DoerflerMarking} below. This restriction is not reflected in practical computations, where moderate values such as $\theta\approx0.5$ are commonly used. The present paper removes this restriction. In particular, we prove for any bulk parameter $\theta\in(0,1)$ rate-optimal convergence of the standard adaptive finite element method with conforming Lagrange elements for the Poisson model problem, which, for given bounded polyhedral Lipschitz domain $\Omega\subset\RR^d$ and $f\in L^2(\Omega)$, seeks the solution to
\begin{align*}
  -\Delta u=f\qquad\text{in }\Omega,\qquad u=0\qquad\text{on }\partial\Omega.
\end{align*}

The abstract counterexample of \cite{DieningKreuzer2021} shows that the restriction on the bulk parameter cannot in general be removed using only the four axioms of adaptivity. We therefore introduce a new axiom, \emph{estimator concentration}, in Section~\ref{subsec:EstimatorConcentration}. It asserts that, apart from a contribution controlled by the estimator on a refinement, all but an arbitrarily small fraction of the estimator can be concentrated on a set whose cardinality is proportional to the refinement complexity. This yields an extended comparison lemma valid for every bulk parameter $\theta<1$, which is the key ingredient for the rate-optimality proof. For the Poisson problem, estimator concentration follows from an $\ell^q$ estimate with some $q<2$ for the changes of the local residual indicators on unrefined elements. The required estimate is obtained from fractional Sobolev stability of the Ritz projection, while a best $n$-term argument converts the $\ell^q$ control into estimator concentration.

\textbf{Structure of the paper.}
Section~\ref{sec:Axioms} introduces the adaptive finite element loop and rate-optimality, recalls the four axioms of adaptivity, and presents the new axiom \emph{estimator concentration} together with the extended comparison lemma and the resulting rate-optimality theorem for arbitrary bulk parameters.  Section~\ref{sec:PMP} verifies estimator concentration for the Poisson problem with conforming Lagrange elements of arbitrary fixed polynomial degree. A central ingredient is the fractional Sobolev stability of the Ritz projection, whose proof uses corresponding stability of the Scott--Zhang interpolation operator. The latter is established in Appendix~\ref{app:SZFractionalStability}.

\textbf{Notation.} 
We frequently use the notation $a \lesssim b$ for values $a,b \in \RR$ if $a \leq C \, b$ with some constant $C < \infty$ that might depend on the shape regularity of the underlying triangulation $\tria$, the polynomial degree $k\in \setN$, or the indices $\sigma,q >0$ of the related spaces $W^{\sigma, q}(\Omega)$, but is otherwise independent of the current triangulation. If we have $a \lesssim b\lesssim a$, we write $a\eqsim b$. 


\section{Axioms of adaptivity}
\label{sec:Axioms}

This section verifies rate-optimal convergence of AFEMs on an abstract level. For this, we first recall the notion of rate-optimal convergence in Section~\ref{subsec:ClassOptimality}, then Section~\ref{subsec:FourAxioms} introduces the four axioms of adaptivity in a simplified version of \cite{CarstensenFeischlPagePraetorius2014} and discusses the need for sufficiently small bulk parameters in the existing proofs. Thereafter, Section~\ref{subsec:EstimatorConcentration} introduces the new axiom \emph{estimator concentration}, which in combination with the other four axioms, yields rate-optimal AFEM convergence for all bulk parameters $\theta < 1$. This new axiom will be verified in Section~\ref{sec:PMP} for the Poisson problem.


\subsection{Rate optimality}\label{subsec:ClassOptimality}

Let $\tria_0$ be an initial triangulation and let $\meshes=\meshes(\tria_0)$ denote the family of all admissible triangulations that can be obtained from $\tria_0$ by successive applications of the refinement routine. For $\tria,\tria_\star\in\meshes$, we write $\tria\leq\tria_\star$ if $\tria_\star$ is a refinement of $\tria$. We assume that the family $\meshes$ is uniformly shape regular. Let $V$ denote the continuous energy space. For every $\tria\in\meshes$, let $V(\tria)$ denote the corresponding finite element space and let $u_\tria\in V(\tria)$ be the discrete solution.
Let $\eta_\tria^2\colon\tria\to[0,\infty)$ be a nonnegative error indicator associated with $u_\tria$. We set for all subsets $\mathcal M\subseteq\tria$ the accumulated indicators
\[
  \eta_\tria^2(\mathcal M)\coloneqq\sum_{T\in\mathcal M}\eta_\tria^2(T),
  \quad
  \eta_\tria(\mathcal{M}) \coloneqq \big(\eta_\tria^2(\mathcal{M}) \big)^{1/2},
  \quad\text{and}\quad
  \eta_\tria\coloneqq \eta_\tria(\tria).
\]
The dependence of the error indicators on the corresponding discrete solution is suppressed in the notation. For the Poisson model problem considered in Section~\ref{sec:PMP}, $\eta_\tria$ is the standard residual error estimator.
To formulate rate-optimality, we distinguish the error estimator $\eta_\tria$ from an abstract error quantity, which we denote by $\err(\tria)$. For the Poisson problem considered in Section~\ref{sec:PMP}, $\err(\tria)$ is the total error, i.e., the combination of the gradient error and data oscillation, cf.~\cite{MekchayNochetto2005,CasconKreuzerNochettoSiebert2008}.
Starting with an initial triangulation $\tria_0$, we apply the AFEM loop
\begin{align*}
  \text{\texttt{SOLVE} -- \texttt{ESTIMATE} -- \texttt{MARK} -- \texttt{REFINE}}.
\end{align*}
More precisely, the steps are as follows:
\begin{tabbing}
  \quad \= \texttt{SOLVE} \qquad \= Compute the discrete solution $u_{\tria_\ell} \in V(\tria_\ell)$.
  \\[2mm]
  \> \texttt{ESTIMATE} \qquad \> Calculate the error
  estimators~$\eta_{\tria_\ell}(T)$ for all $T\in \tria_\ell$. 
  \\[2mm]
  \> \texttt{MARK} \qquad \> Mark a set~$\mathcal{M}_\ell \subset \tria_\ell$ of simplices for refinement. 
  \\[2mm]
  \>\texttt{REFINE} \qquad \> Refine the marked simplices to
  get~$\tria_{\ell+1} \coloneqq \mathtt{refine(\tria_\ell,\mathcal{M}_\ell)}$.
\end{tabbing}

\smallskip\noindent%
The loop then repeats with $\ell\mapsto\ell+1$. For simplicity, we assume that the \texttt{SOLVE} step computes the exact discrete solution. The \texttt{REFINE} step leads to a refinement $\tria_{\ell+1}$ of $\tria_\ell$.
We require the following two properties of the refinement routine.

\medskip
\noindent\textup{\textbf{Closure estimate.}}
There exists a constant $\CBDD<\infty$ such that we have, for all sets of marked elements $\mathcal M_\ell\subset\tria_\ell$ with $\tria_{\ell+1}=\mathtt{refine}(\tria_\ell,\mathcal M_\ell)$ for $\ell=0,\dots,L-1$,
\begin{align}\label{eq:Closure}
  \#\tria_L-\#\tria_0\leq\CBDD\sum_{\ell=0}^{L-1}\#\mathcal M_\ell
  \qquad\text{for all }L\in\setN.
\end{align}

\noindent\textup{\textbf{Overlay estimate.}}
Any two triangulations $\tria_1,\tria_2\in\meshes$ admit a common refinement $\tria_1\oplus\tria_2\in\meshes$ such that
\begin{align}\label{eq:Overlay}
  \#(\tria_1\oplus\tria_2)\leq\#\tria_1+\#\tria_2-\#\tria_0.
\end{align}

As shown by Binev, Dahmen, and DeVore in \cite{BinevDahmenDeVore2004} the newest vertex bisection satisfies these properties, where $\tria_1 \oplus \tria_2$ is the smallest joint refinement of~$\tria_1$ and $\tria_2$.
Stevenson generalized in~\cite{Stevenson2008} this statement to higher dimensions using the bisection routines of Kossaczk{\'y}~\cite{Kossaczky1994}, Maubach~\cite{Maubach1995}, and Traxler~\cite{Traxler1997} under suitable assumptions on $\tria_0$. These restrictions on $\tria_0$ have been removed in~\cite{DieningGehringStorn2026}.
The remaining step is \texttt{MARK}, which uses the following marking strategy.

\begin{definition}[D\"orfler marking]
  \label{def:DoerflerMarking}
  Given a bulk parameter $\theta\in(0,1)$, we select a set $\mathcal M_j\subset\tria_j$ of almost minimal cardinality satisfying
  \begin{align}
    \label{eq:marking-criterion}
    \theta\eta_{\tria_j}^2
    \leq \eta_{\tria_j}^2(\mathcal M_j).
  \end{align}
Almost minimal cardinality means that the cardinality of the chosen set is bounded by a fixed constant $C_{\rm mark}\ge1$ times the minimal possible cardinality. 
\end{definition}
Our main target is the verification of rate-optimal convergence of the AFEM loop for any bulk parameter $\theta\in(0,1)$. 
To define rate-optimality, we denote the set of  admissible triangulations with at most $N\in\setN_0$ additional elements by
\begin{align*}
  \meshes_N \coloneqq \lbrace \tria\in\meshes\colon \#\tria-\#\tria_0\leq N\rbrace.
\end{align*}
\begin{definition}[Approximation classes]\label{def:ApproxClass}
  For any $s>0$ and solution $u\in V$ with given right-hand side $f\in V^*$, we define
  \begin{align*}
    \lVert (u,f) \rVert_{\bbA_s^{\err}}
    &\coloneqq \sup_{N\in\setN_0}(1+N)^s\min_{\tria\in\meshes_N}\err(\tria),\\
    \lVert (u,f) \rVert_{\bbA_s^\eta}
    &\coloneqq \sup_{N\in\setN_0}(1+N)^s\min_{\tria\in\meshes_N}\eta_\tria.
  \end{align*}
  The corresponding nonlinear approximation classes are
  \begin{align*}
    \bbA_s^{\err}
    &\coloneqq \bigl\{(u,f)\in V\times V^*\colon \lVert(u,f)\rVert_{\bbA_s^{\err}}<\infty\bigr\},\\
    \bbA_s^\eta
    &\coloneqq \bigl\{(u,f)\in V\times V^*\colon \lVert(u,f)\rVert_{\bbA_s^\eta}<\infty\bigr\}.
  \end{align*}
\end{definition}
The class $\bbA_s^{\err}$ measures the best possible decay rate of the error and will be our primary notion of approximation class. The estimator-based class $\bbA_s^\eta$ corresponds to the formulation used in the axiomatic framework of \cite{CarstensenFeischlPagePraetorius2014}. As discussed in Section~\ref{subsec:FourAxioms}, reliability and efficiency imply $\err(\tria)\eqsim\eta_\tria$ and therefore $\bbA_s^{\err}=\bbA_s^\eta$ with equivalent quasi-norms.
%
\begin{definition}[Rate-optimality]\label{def:ClassOptimal}
AFEM is rate-optimal with respect to the error for a class of right-hand sides $f\in\Sigma\subset V^*$ if for every rate $s>0$ there exists a constant $C(s)<\infty$ such that for any solution $u\in V$ with right-hand side $f\in\Sigma$
  \begin{align*}
    \sup_{j\in\mathbb N_0}(\#\tria_j-\#\tria_0+1)^s\err(\tria_j)
    \leq C(s)\,\lVert(u,f)\rVert_{\bbA_s^{\err}}.
  \end{align*}
  Analogously,  AFEM is rate-optimal with respect to the error estimator if
  \begin{align*}
    \sup_{j\in\mathbb N_0}
    (\#\tria_j-\#\tria_0+1)^s\eta_{\tria_j}
    \leq C(s)\,\lVert(u,f)\rVert_{\bbA_s^\eta}.
  \end{align*}
\end{definition}


\subsection{The four axioms}
\label{subsec:FourAxioms}

The analysis of adaptive finite element methods is commonly formulated in terms of the four ``axioms of adaptivity'' of \cite{CarstensenFeischlPagePraetorius2014}. We use the following simplified version, cf.~\cite{DieningKreuzer2021}. Besides the estimator $\eta_\tria$, let $\delta(\tria,\tria_\star)\geq0$ denote a discrete error difference associated with two nested meshes $\tria,\tria_\star\in\meshes$ with $\tria\leq\tria_\star$.

\begin{definition}[Axioms of adaptivity -- simplified]
  \leavevmode%
  \begin{enumerate}[label=\textup{(\textbf{A\arabic*})},ref=\textup{(A\arabic*)}]
  \item \label{axiom:stability}
    \textup{\textbf{Stability.}} There exists a constant $C_1<\infty$ such that, for all $\tria,\tria_\star\in\meshes$ with $\tria\leq\tria_\star$,
    \begin{align*}
      \bigl|\eta_\tria(\tria\cap\tria_\star)-\eta_{\tria_\star}(\tria\cap\tria_\star)\bigr|
      \leq C_1\delta(\tria,\tria_\star).
    \end{align*}

  \item \label{axiom:reduction}
    \textup{\textbf{Reduction.}} There exist constants $\rho\in[0,1)$ and $C_2<\infty$ such that, for all $\tria,\tria_\star\in\meshes$ with $\tria\leq\tria_\star$,
    \begin{align*}
      \eta_{\tria_\star}^2(\tria_\star\setminus\tria)
      \leq \rho\,\eta_\tria^2(\tria\setminus\tria_\star)
      +C_2\delta(\tria,\tria_\star)^2.
    \end{align*}

  \item \label{axiom:quasi-orthogonality}
    \textup{\textbf{Quasi-orthogonality.}} There exists a constant $C_3<\infty$ such that, for every nested sequence $\{\tria_k\}_{k\in\mathbb N_0}\subset\meshes$,
    \begin{align*}
      \sum_{k=K}^{\infty}\delta(\tria_k,\tria_{k+1})^2
      \leq C_3\eta_{\tria_K}^2
      \qquad\text{for all }K\in\mathbb N_0.
    \end{align*}

  \item \label{axiom:discrete-reliability}
    \textup{\textbf{Discrete reliability.}} There exists a constant $C_4<\infty$ such that, for all $\tria,\tria_\star\in\meshes$ with $\tria\leq\tria_\star$,
    \begin{align*}
      \delta(\tria,\tria_\star)^2
      \leq C_4\eta_\tria^2(\tria\setminus\tria_\star).
    \end{align*}
  \end{enumerate}
\end{definition}

The four axioms are formulated solely in terms of the estimator $\eta_\tria$ and the discrete error difference~$\delta$. To relate the estimator-based approximation class $\bbA_s^\eta$ to the error-based class $\bbA_s^{\err}$, we use the following two standard properties.

\smallskip
\noindent\textup{\textbf{Reliability.}}
There exists a constant $C_{\mathrm{rel}}<\infty$ such that
\begin{align}\label{eq:Reliability}
  \err(\tria)^2\leq C_{\mathrm{rel}}\,\eta_\tria^2
  \qquad\text{for all }\tria\in\meshes.
\end{align}

\noindent\textup{\textbf{Efficiency.}}
There exists a constant $C_{\mathrm{eff}}>0$ such that
\begin{align}\label{eq:Efficiency}
  C_{\mathrm{eff}}\,\eta_\tria^2\leq\err(\tria)^2
  \qquad\text{for all }\tria\in\meshes.
\end{align}
In the framework of \cite{CarstensenFeischlPagePraetorius2014}, reliability follows from discrete reliability together with the mild requirement that the exact solution can be approximated arbitrarily well by admissible refinements; cf.~\cite[Lem.~3.4]{CarstensenFeischlPagePraetorius2014}. Here, we state reliability separately since $\err(\tria)$ is an independent abstract error quantity. Efficiency is an additional property. If both estimates hold, then $\err(\tria)\eqsim\eta_\tria$ and consequently $\bbA_s^{\err}=\bbA_s^\eta$ with equivalent quasi-norms.
The four axioms imply rate-optimality with respect to the estimator for sufficiently small bulk parameters. More precisely, \cite[Thm.~4.1(ii)]{CarstensenFeischlPagePraetorius2014} yields with our simplified  (or their more general) axioms \ref{axiom:stability}\,--\,\ref{axiom:discrete-reliability} the following.

\begin{theorem}[Rate-optimality]\label{thm:ClassOptimality}
  Suppose that \ref{axiom:stability}\,--\,\ref{axiom:discrete-reliability} hold and that the refinement routine satisfies the closure and overlay estimates. Then AFEM with D\"orfler marking is rate-optimal with respect to the error estimator for all
  \begin{align*}
    0<\theta<\theta_\star\coloneqq(1+C_1^2C_4)^{-1}.
  \end{align*}
  If, in addition, reliability and efficiency \eqref{eq:Reliability}--\eqref{eq:Efficiency} hold, then AFEM is also rate-optimal with respect to the error.
\end{theorem}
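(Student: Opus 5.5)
The proof follows the standard route of \cite{CarstensenFeischlPagePraetorius2014} in three steps: linear convergence of the estimator, a comparison lemma (optimality of D\"orfler marking), and a summation argument based on the closure estimate \eqref{eq:Closure}. The restriction $\theta<\theta_\star$ enters only in the comparison lemma.

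\textbf{Step 1: R-linear convergence.} I first show that there exist $C_{\mathrm{lin}}<\infty$ and $q\in(0,1)$ with $\eta_{\tria_{\ell+k}}^2\le C_{\mathrm{lin}}q^k\eta_{\tria_\ell}^2$ for all $\ell,k$. Fix $\tria=\tria_\ell$ and $\tria_\star=\tria_{\ell+1}$. I split $\eta_{\tria_\star}^2$ into the parts on $\tria\cap\tria_\star$ and on $\tria_\star\setminus\tria$. On the first part, stability \ref{axiom:stability} with Young's inequality gives $(1+\varepsilon)\eta_\tria^2(\tria\cap\tria_\star)+C_\varepsilon\delta^2$. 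On the second part, reduction \ref{axiom:reduction} applies. Since the marked set satisfies $\mathcal M_\ell\subseteq\tria\setminus\tria_\star$, D\"orfler marking yields the estimator reduction
\[
\eta_{\tria_{\ell+1}}^2\le \kappa\,\eta_{\tria_\ell}^2 + C\,\delta(\tria_\ell,\tria_{\ell+1})^2,\qquad \kappa<1,
\]
for $\varepsilon$ small, and this holds for every $\theta\in(0,1)$. Summing over $\ell$ and inserting quasi-orthogonality \ref{axiom:quasi-orthogonality} gives $\sum_{k\ge \ell}\eta_{\tria_k}^2\lesssim\eta_{\tria_\ell}^2$. The standard tail-summability lemma then converts this into R-linear convergence.

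\textbf{Step 2: Comparison lemma.} Let $\tria_\ell$ be given, and choose $\lambda>0$ small with $\theta<\theta_\star(1-\lambda)$-type slack. By the definition of $\bbA_s^\eta$ there is $\tria_\varepsilon$ with $\eta_{\tria_\varepsilon}^2\le\lambda\eta_{\tria_\ell}^2$ and $\#\tria_\varepsilon-\#\tria_0\lesssim(\lambda^{1/2}\eta_{\tria_\ell})^{-1/s}\lVert(u,f)\rVert_{\bbA_s^\eta}^{1/s}$. Put $\tria_\star=\tria_\ell\oplus\tria_\varepsilon$ and $\mathcal R=\tria_\ell\setminus\tria_\star$. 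The overlay estimate \eqref{eq:Overlay} bounds $\#\mathcal R\le\#\tria_\star-\#\tria_\ell\le\#\tria_\varepsilon-\#\tria_0$. Quasi-monotonicity $\eta_{\tria_\star}^2\lesssim\eta_{\tria_\varepsilon}^2$ follows from \ref{axiom:stability}, \ref{axiom:reduction} and \ref{axiom:discrete-reliability} and absorbs into the constant. Stability on $\tria_\ell\cap\tria_\star$ gives
\[
\eta_{\tria_\ell}^2(\tria_\ell\setminus\mathcal R)\le (1+\varepsilon)\eta_{\tria_\star}^2+(1+\varepsilon^{-1})C_1^2\delta^2 .
\]
Discrete reliability \ref{axiom:discrete-reliability} bounds $\delta^2\le C_4\eta_{\tria_\ell}^2(\mathcal R)$. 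Choosing $\varepsilon$ optimally and $\lambda$ small yields $\eta_{\tria_\ell}^2(\mathcal R)\ge\theta\eta_{\tria_\ell}^2$ exactly when $\theta<(1+C_1^2C_4)^{-1}$. Hence $\mathcal R$ satisfies the bulk criterion, and the almost-minimality of $\mathcal M_\ell$ gives $\#\mathcal M_\ell\le C_{\rm mark}\#\mathcal R\lesssim\eta_{\tria_\ell}^{-1/s}\lVert(u,f)\rVert_{\bbA_s^\eta}^{1/s}$.

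\textbf{Step 3: Summation.} By the closure estimate \eqref{eq:Closure},
\[
\#\tria_L-\#\tria_0\lesssim\sum_{\ell<L}\eta_{\tria_\ell}^{-1/s}\lVert(u,f)\rVert_{\bbA_s^\eta}^{1/s}.
\]
Step 1 gives $\eta_{\tria_L}\lesssim q^{(L-\ell)/2}\eta_{\tria_\ell}$, so $\eta_{\tria_\ell}^{-1/s}\lesssim q^{(L-\ell)/(2s)}\eta_{\tria_L}^{-1/s}$. The sum is therefore a geometric series bounded by $\eta_{\tria_L}^{-1/s}$, and rearranging gives the claimed estimator rate; the case $\#\tria_L=\#\tria_0$ is trivial. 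For the error statement, reliability and efficiency give $\err\eqsim\eta$ and equivalence of the classes.

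I expect Step 2 to be the main obstacle. It requires bookkeeping the constants so that the threshold comes out precisely as $\theta_\star=(1+C_1^2C_4)^{-1}$. The choices of $\varepsilon$ and $\lambda$, and the quasi-monotonicity constant, must be tuned so that they perturb the threshold only by an arbitrarily small amount, which is handled by taking $\lambda\to0$.
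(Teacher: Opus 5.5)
Your proposal is correct. It follows the standard argument of \cite{CarstensenFeischlPagePraetorius2014}: estimator reduction plus quasi-orthogonality for linear convergence, the comparison lemma applied to the overlay $\tria_\ell\oplus\tria_\varepsilon$ via quasi-monotonicity, and the closure-based geometric summation. This is exactly what the paper invokes by citing \cite[Thm.~4.1(ii)]{CarstensenFeischlPagePraetorius2014} and sketches in the proof of Theorem~\ref{thm:ClassOptimalityAllTheta}. One small imprecision is in Step~2: you get the bulk criterion by taking the Young parameter large and then $\lambda$ small, which shows that $\theta<\theta_\star$ suffices, not that it is ``exactly'' the condition.
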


The severe restriction $\theta<\theta_\star$ enters through the following comparison lemma. The underlying comparison principle goes back to Stevenson \cite{Stevenson2007}; its formulation in the axiomatic setting is given in \cite[Prop.~4.12]{CarstensenFeischlPagePraetorius2014}.

\begin{lemma}[Comparison lemma]\label{lem:Comparison}
  Suppose that \ref{axiom:stability} and \ref{axiom:discrete-reliability} hold. For every $0<\theta<\theta_\star$, there exists a $\kappa\in(0,1)$ such that, for all $\tria,\tria_\star\in\meshes$ with $\tria\leq\tria_\star$,
  \begin{align*}
    \eta_{\tria_\star}^2\leq\kappa\,\eta_\tria^2
    \qquad\text{implies}\qquad
    \theta\eta_\tria^2\leq\eta_\tria^2(\tria\setminus\tria_\star).
  \end{align*}
\end{lemma}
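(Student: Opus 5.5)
The plan is to split the estimator on $\tria$ into its part on unrefined elements $\tria\cap\tria_\star$ and its part on refined elements $\tria\setminus\tria_\star$. The unrefined part is then compared with the estimator on $\tria_\star$ using stability \ref{axiom:stability}, and the resulting perturbation $\delta(\tria,\tria_\star)$ is absorbed via discrete reliability \ref{axiom:discrete-reliability}.

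Since $\tria\cap\tria_\star\subseteq\tria_\star$, we have $\eta_{\tria_\star}(\tria\cap\tria_\star)\le\eta_{\tria_\star}\le\kappa^{1/2}\eta_\tria$. Stability \ref{axiom:stability} then gives
\begin{align*}
  \eta_\tria(\tria\cap\tria_\star)\le \eta_{\tria_\star}(\tria\cap\tria_\star)+C_1\delta(\tria,\tria_\star)\le \kappa^{1/2}\eta_\tria + C_1\delta(\tria,\tria_\star).
\end{align*}
Squaring with Young's inequality for some parameter $\varepsilon>0$ yields
\begin{align*}
  \eta_\tria^2(\tria\cap\tria_\star)\le(1+\varepsilon)\kappa\,\eta_\tria^2+(1+\varepsilon^{-1})C_1^2\,\delta(\tria,\tria_\star)^2 .
\end{align*}
By \ref{axiom:discrete-reliability}, $\delta(\tria,\tria_\star)^2\le C_4\eta_\tria^2(\tria\setminus\tria_\star)$. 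Inserting this into $\eta_\tria^2=\eta_\tria^2(\tria\cap\tria_\star)+\eta_\tria^2(\tria\setminus\tria_\star)$ gives
\begin{align*}
  \bigl(1-(1+\varepsilon)\kappa\bigr)\eta_\tria^2\le\bigl(1+(1+\varepsilon^{-1})C_1^2C_4\bigr)\eta_\tria^2(\tria\setminus\tria_\star).
\end{align*}
This is exactly the Dörfler property with bulk parameter
\[
  \frac{1-(1+\varepsilon)\kappa}{1+(1+\varepsilon^{-1})C_1^2C_4}.
\]

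It remains to choose the parameters, which is the only delicate step. As $\varepsilon\to\infty$ and then $\kappa\to0$ (more precisely, $\varepsilon\kappa\to0$), the displayed fraction tends to $(1+C_1^2C_4)^{-1}=\theta_\star$. So, given $\theta<\theta_\star$, I first fix $\varepsilon$ large enough that $(1+(1+\varepsilon^{-1})C_1^2C_4)^{-1}>\theta$. I then pick $\kappa\in(0,1)$ small enough that $1-(1+\varepsilon)\kappa\ge\theta\bigl(1+(1+\varepsilon^{-1})C_1^2C_4\bigr)$, which is possible because the right-hand side is strictly less than $1$. With this choice we obtain $\theta\eta_\tria^2\le\eta_\tria^2(\tria\setminus\tria_\star)$.

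The strict inequality $\theta<\theta_\star$ is what makes the parameter choice possible, and this step is also the reason the classical argument cannot reach every $\theta<1$.
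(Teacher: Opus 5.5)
Your argument is correct and is the standard proof: split the estimator into unrefined and refined elements, apply stability with a Young parameter $\varepsilon$, absorb $\delta(\tria,\tria_\star)^2$ via discrete reliability, then fix $\varepsilon$ before choosing $\kappa$. The paper gives no proof of its own; it defers to \cite[Prop.~4.12]{CarstensenFeischlPagePraetorius2014}, which contains essentially this argument.
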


The lemma shows that, if a refinement $\tria_\star\geq\tria_\ell$ sufficiently reduces the error estimator, then the refined elements $\tria_\ell\setminus\tria_\star$ satisfy the D\"orfler criterion. Consequently, the almost minimality of the marked set $\mathcal M_\ell$ implies
\begin{align}\label{eq:ConsCompLem}
  \#\mathcal M_\ell
  \leq C_{\rm mark}
  \#(\tria_\ell\setminus\tria_\star)
  \leq C_{\rm mark}
  \big(\#\tria_\star-\#\tria_\ell\big).
\end{align}
This estimate allows us to compare the number of marked elements with the complexity of a suitable refinement constructed from the approximation class. However, the comparison lemma requires $\theta<\theta_\star$. In fact, the abstract counterexample of \cite{DieningKreuzer2021} shows that a restriction on the bulk parameter~$\theta$ cannot in general be removed using only the four axioms~\ref{axiom:stability}--\,\ref{axiom:discrete-reliability}. This motivates introducing a novel axiom in the following subsection.


\subsection{Estimator concentration}
\label{subsec:EstimatorConcentration}

The classical comparison lem\-ma (Lemma~\ref{lem:Comparison}) states that a sufficiently strong reduction of the estimator on a refinement $\tria_\star\geq\tria$ implies that the set of refined elements $\tria\setminus\tria_\star$ satisfies the D\"orfler marking criterion~\eqref{eq:marking-criterion}, provided $\theta<\theta_\star$. To prove rate-optimality for every $\theta<1$, we need such a comparison for arbitrary bulk parameters. For the cardinality argument, however, the D\"orfler criterion need not be satisfied by the refined elements themselves. It suffices to find a set $\mathcal S\subseteq\tria$ satisfying the D\"orfler criterion whose cardinality is controlled by the number of elements added by the refinement. We therefore aim for the following extension of Lemma~\ref{lem:Comparison}. We will show that estimator concentration, introduced below, provides exactly this extension.

\begin{lemma}[Extended comparison lemma]
  \label{lem:ExtendedComparison}
  Suppose that estimator concentration, as formulated in Axiom~\ref{axiom:EstimatorConcentration} below, holds.  For every $\theta\in(0,1)$, there exist constants $\kappa_\theta\in(0,1)$ and $C_\theta<\infty$ such that
  \begin{align*}
    \eta_{\tria_\star}^2\leq\kappa_\theta\,\eta_\tria^2\qquad\text{with }\tria,\tria_\star\in\meshes \text{ and } \tria\leq\tria_\star
  \end{align*}
  implies the existence of a subset $\mathcal S\subseteq\tria$ satisfying
  \begin{align*}
    \theta\eta_\tria^2
    \leq\eta_\tria^2(\mathcal S)
    \qquad\text{and}\qquad
    \#\mathcal S
    \leq C_\theta\bigl(\#\tria_\star-\#\tria\bigr).
  \end{align*}
\end{lemma}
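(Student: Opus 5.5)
The plan is to derive the statement directly from Axiom~\ref{axiom:EstimatorConcentration}, without using the classical Lemma~\ref{lem:Comparison} or any smallness of $\theta$. I expect the axiom, as described in the introduction, to take the following shape. For every $\varepsilon>0$ there is a constant $C_\varepsilon<\infty$ such that for all $\tria\leq\tria_\star$ in $\meshes$ there exists a set $\mathcal S\subseteq\tria$ with
\begin{align*}
  \eta_\tria^2(\tria\setminus\mathcal S)\leq\varepsilon\,\eta_\tria^2+C_\varepsilon\,\eta_{\tria_\star}^2
  \qquad\text{and}\qquad
  \#\mathcal S\leq C_\varepsilon\bigl(\#\tria_\star-\#\tria\bigr).
\end{align*}
Here $\mathcal S$ will typically contain the refined elements $\tria\setminus\tria_\star$ together with the unrefined elements carrying the largest indicator changes. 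I will work with this formulation. If the axiom is stated with the roles of the constants arranged slightly differently, for instance $\varepsilon$ multiplying $\eta_\tria^2(\tria\cap\tria_\star)$ only, the argument below adapts verbatim.

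Given $\theta\in(0,1)$, I fix $\varepsilon\coloneqq(1-\theta)/2$ and apply the axiom with this $\varepsilon$. I then define
\begin{align*}
  \kappa_\theta\coloneqq\min\Bigl\{\tfrac12,\tfrac{1-\theta}{2C_\varepsilon}\Bigr\}\in(0,1)
  \qquad\text{and}\qquad
  C_\theta\coloneqq C_\varepsilon.
\end{align*}
Assume $\eta_{\tria_\star}^2\leq\kappa_\theta\eta_\tria^2$ and let $\mathcal S$ be the set provided by the axiom. Splitting $\eta_\tria^2=\eta_\tria^2(\mathcal S)+\eta_\tria^2(\tria\setminus\mathcal S)$ gives
\begin{align*}
  \eta_\tria^2(\mathcal S)
  \geq(1-\varepsilon)\eta_\tria^2-C_\varepsilon\kappa_\theta\eta_\tria^2
  \geq\Bigl(1-\tfrac{1-\theta}{2}-\tfrac{1-\theta}{2}\Bigr)\eta_\tria^2
  =\theta\,\eta_\tria^2 .
\end{align*}
The cardinality bound $\#\mathcal S\leq C_\theta(\#\tria_\star-\#\tria)$ is inherited directly from the axiom.

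Two degenerate points need checking. First, if $\tria_\star=\tria$, the hypothesis $\eta_\tria^2\leq\kappa_\theta\eta_\tria^2$ with $\kappa_\theta<1$ forces $\eta_\tria=0$. In that case $\mathcal S=\emptyset$ works, and the statement is consistent with the cardinality bound being zero. Second, the axiom's constant must be independent of $\tria,\tria_\star$, which is exactly what makes $\kappa_\theta$ and $C_\theta$ uniform.

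The only substantive step is therefore the choice of the ordering of quantifiers: $\varepsilon$ is fixed from $\theta$ first, and then $\kappa_\theta$ is chosen small relative to $C_\varepsilon$. The real obstacle lies not in this lemma but in verifying Axiom~\ref{axiom:EstimatorConcentration} itself for the Poisson problem, which the paper defers to Section~\ref{sec:PMP}.
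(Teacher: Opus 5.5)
Your proof is correct and follows the paper's argument: fix $\varepsilon=(1-\theta)/2$, take $\kappa_\theta$ proportional to $(1-\theta)/B_\varepsilon$, and set $C_\theta=A_\varepsilon$. Merging the axiom's two constants $A_\varepsilon$ and $B_\varepsilon$ into one, and the extra $\min\{\tfrac12,\cdot\}$ in $\kappa_\theta$, are immaterial; the paper gets $\kappa_\theta<1$ for free from its requirement $B_\varepsilon\geq1$.
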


Compared with the classical comparison lemma, the restriction $\theta<\theta_\star$ is removed at the expense of replacing the set of refined elements $\tria\setminus\tria_\star$ by a set $\mathcal S$ whose cardinality is controlled by the refinement complexity.
The following new axiom guarantees precisely this property.

\begin{axiom}[Estimator concentration]
  \label{axiom:EstimatorConcentration}
  For every $\epsilon>0$, there exist constants $A_\epsilon<\infty$ and $B_\epsilon\geq1$ such that, for all $\tria,\tria_\star\in\meshes$ with $\tria\leq\tria_\star$, there exists a subset $\mathcal S\subseteq\tria$ satisfying
  \begin{align*}
    \#\mathcal S
    &\leq A_\epsilon\bigl(\#\tria_\star-\#\tria\bigr)
    \qquad\text{and}\qquad
    \eta_\tria^2(\tria\setminus\mathcal S)
    \leq\epsilon\,\eta_\tria^2+B_\epsilon\,\eta_{\tria_\star}^2.
  \end{align*}
\end{axiom}

Estimator concentration states that, apart from a contribution controlled by the estimator on the refined mesh, all but an arbitrarily small fraction of the estimator on $\tria$ can be concentrated on a set whose cardinality is proportional to the refinement complexity.

\begin{proof}[Proof of Lemma~\ref{lem:ExtendedComparison}]
  Fix $\theta\in(0,1)$ and set $\epsilon_\theta\coloneqq(1-\theta)/2$.  Let $A_{\epsilon_\theta}$ and $B_{\epsilon_\theta}$ denote the constants provided by estimator concentration, cf.~Axiom~\ref{axiom:EstimatorConcentration}, and set
  \begin{align*}
    \kappa_\theta\coloneqq\frac{1-\theta}{2B_{\epsilon_\theta}}\in(0,1).
  \end{align*}
  If $\eta_{\tria_\star}^2\leq\kappa_\theta\eta_\tria^2$, then estimator concentration provides a subset $\mathcal S\subseteq\tria$ with
  \begin{align*}
    \eta_\tria^2(\tria\setminus\mathcal S)
    &\leq\epsilon_\theta\eta_\tria^2+B_{\epsilon_\theta}\eta_{\tria_\star}^2 \leq (\epsilon_\theta + \kappa_\theta B_{\epsilon_\theta}) \eta_\tria^2
    \leq(1-\theta)\eta_\tria^2.
  \end{align*}
  Consequently, we obtain
  \begin{align*}
    \theta\eta_\tria^2
    \leq\eta_\tria^2-\eta_\tria^2(\tria\setminus\mathcal S)
    =\eta_\tria^2(\mathcal S).
  \end{align*}
  The cardinality estimate follows with $C_\theta\coloneqq A_{\epsilon_\theta}$.
\end{proof}

Suppose that a refinement $\tria_\star\geq\tria_\ell$ satisfies the estimator reduction in Lemma~\ref{lem:ExtendedComparison}. The resulting set $\mathcal S\subseteq\tria_\ell$ satisfies the D\"orfler marking criterion, and the almost minimality of the marked set $\mathcal M_\ell$ therefore yields
\begin{align}\label{eq:ConsCompLem2}
  \#\mathcal M_\ell
  \leq C_{\rm mark}\#\mathcal S
  \leq C_{\rm mark}C_\theta
  \bigl(\#\tria_\star-\#\tria_\ell\bigr).
\end{align}
The additional constant is harmless in the standard cardinality argument; such a fixed multiplicative factor is already allowed in the proof of \cite[Lem.~5.2]{Stevenson2007}. In contrast to the classical comparison lemma, the extended comparison lemma is available for every $\theta\in(0,1)$.

\begin{theorem}[Rate-optimality for arbitrary bulk parameters]
  \label{thm:ClassOptimalityAllTheta}
  Suppose that \ref{axiom:stability}\,--\,\ref{axiom:discrete-reliability} hold and that estimator concentration, as formulated in Axiom~\ref{axiom:EstimatorConcentration}, holds. Moreover, let the refinement routine satisfy the closure and overlay estimates. Then, for any fixed bulk parameter $\theta\in(0,1)$, AFEM with D\"orfler marking is rate-optimal with respect to the error estimator.  If, in addition, reliability and efficiency \eqref{eq:Reliability}--\eqref{eq:Efficiency} hold, then AFEM is also rate-optimal with respect to the error.
\end{theorem}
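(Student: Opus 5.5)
The plan is to follow the standard rate-optimality argument of \cite{CarstensenFeischlPagePraetorius2014} and \cite{Stevenson2007} and change only one ingredient: Lemma~\ref{lem:Comparison} is replaced by Lemma~\ref{lem:ExtendedComparison}.

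\textbf{Step 1: linear convergence.} First I would establish R-linear convergence of the estimator for any $\theta\in(0,1)$. There exist $C_{\rm lin}<\infty$ and $q_{\rm lin}\in(0,1)$ such that
\begin{align*}
\eta_{\tria_{\ell+k}}^2\leq C_{\rm lin}q_{\rm lin}^k\,\eta_{\tria_\ell}^2\qquad\text{for all }\ell,k\in\setN_0.
\end{align*}
This follows from \ref{axiom:stability}--\ref{axiom:quasi-orthogonality} as in \cite[Prop.~4.10]{CarstensenFeischlPagePraetorius2014}. Stability and reduction, combined with the D\"orfler criterion, give the perturbed contraction
\begin{align*}
\eta_{\tria_{\ell+1}}^2\leq q\,\eta_{\tria_\ell}^2+C\delta(\tria_\ell,\tria_{\ell+1})^2
\end{align*}
with some $q<1$. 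Since the marked set satisfies $\tria_\ell\setminus\tria_{\ell+1}\supseteq\mathcal M_\ell$, this holds for every $\theta>0$. Quasi-orthogonality then yields tail summability $\sum_{k>\ell}\eta_{\tria_k}^2\lesssim\eta_{\tria_\ell}^2$, and this is equivalent to R-linear convergence. No smallness of $\theta$ enters this step.

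\textbf{Step 2: cardinality bound for the marked sets.} Next I bound $\#\mathcal M_\ell$. Fix $s>0$ and assume $\lVert(u,f)\rVert_{\bbA_s^\eta}<\infty$. Let $\kappa_\theta$ be the constant from Lemma~\ref{lem:ExtendedComparison}. I need a mesh $\tria_\star\geq\tria_\ell$ with $\eta_{\tria_\star}^2\leq\kappa_\theta\eta_{\tria_\ell}^2$ and few extra elements.

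\emph{Construction of $\tria_\star$.} Choose $\lambda>0$ small and $N\in\setN_0$ minimal with
\begin{align*}
(1+N)^{-s}\lVert(u,f)\rVert_{\bbA_s^\eta}\leq\lambda\,\eta_{\tria_\ell},
\end{align*}
so that $N\lesssim\lambda^{-1/s}\lVert(u,f)\rVert_{\bbA_s^\eta}^{1/s}\eta_{\tria_\ell}^{-1/s}$. Pick $\tria_\varepsilon\in\meshes_N$ with $\eta_{\tria_\varepsilon}\leq\lambda\eta_{\tria_\ell}$ and set $\tria_\star\coloneqq\tria_\ell\oplus\tria_\varepsilon$. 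The overlay estimate gives $\#\tria_\star-\#\tria_\ell\leq N$.

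\emph{Passing to the overlay.} The one technical point is the quasi-monotonicity $\eta_{\tria_\star}^2\leq C_{\rm mon}\eta_{\tria_\varepsilon}^2$. It follows from \ref{axiom:stability}, \ref{axiom:reduction} and \ref{axiom:discrete-reliability} applied to the pair $\tria_\varepsilon\leq\tria_\star$. Choosing $\lambda^2C_{\rm mon}\leq\kappa_\theta$ then gives $\eta_{\tria_\star}^2\leq\kappa_\theta\eta_{\tria_\ell}^2$.

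\emph{Applying the extended comparison lemma.} Lemma~\ref{lem:ExtendedComparison} now provides a set $\mathcal S$ satisfying the D\"orfler criterion with $\#\mathcal S\leq C_\theta N$. By \eqref{eq:ConsCompLem2},
\begin{align*}
\#\mathcal M_\ell\lesssim C_{\rm mark}C_\theta\,\lVert(u,f)\rVert_{\bbA_s^\eta}^{1/s}\,\eta_{\tria_\ell}^{-1/s}.
\end{align*}

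\textbf{Step 3: summation.} Combine the closure estimate \eqref{eq:Closure} with Step 2 to get
\begin{align*}
\#\tria_L-\#\tria_0\lesssim\lVert(u,f)\rVert_{\bbA_s^\eta}^{1/s}\sum_{\ell<L}\eta_{\tria_\ell}^{-1/s}.
\end{align*}
Linear convergence (Step 1) gives $\eta_{\tria_\ell}^{-1/s}\lesssim q_{\rm lin}^{(L-\ell)/(2s)}\eta_{\tria_L}^{-1/s}$, so the sum is a geometric series bounded by $\eta_{\tria_L}^{-1/s}$. Rearranging yields
\begin{align*}
(\#\tria_L-\#\tria_0)^s\,\eta_{\tria_L}\lesssim\lVert(u,f)\rVert_{\bbA_s^\eta}.
\end{align*}
The term $+1$ in Definition~\ref{def:ClassOptimal} and the case $L=0$ are handled by the trivial bound $\eta_{\tria_L}\lesssim\eta_{\tria_0}\leq\lVert(u,f)\rVert_{\bbA_s^\eta}$, which again follows from Step 1. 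The statement for the error then follows because reliability and efficiency give $\err\eqsim\eta$ and equivalent approximation classes.

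\textbf{Expected obstacle.} The main obstacle is confirming that no hidden dependence on $\theta<\theta_\star$ appears elsewhere in the argument. Step 1 is fine, since linear convergence holds for all $\theta$. In Step 2, the constant $C_\theta$ depends on $\theta$ but not on the mesh, so $\#\mathcal M_\ell$ is still proportional to $N$ with a uniform factor. I would therefore verify carefully that quasi-monotonicity and Step 1 use only \ref{axiom:stability}--\ref{axiom:discrete-reliability}. Everything else carries over verbatim from \cite[Sec.~4]{CarstensenFeischlPagePraetorius2014}.
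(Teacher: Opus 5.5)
Your proposal is correct and follows the paper's proof. The paper uses the same standard Stevenson/CFPP cardinality argument: the approximation class, the overlay estimate, and quasi-monotonicity produce $\tria_\star\geq\tria_\ell$ with $\eta_{\tria_\star}^2\leq\kappa_\theta\eta_{\tria_\ell}^2$; then Lemma~\ref{lem:ExtendedComparison}, almost-minimal D\"orfler marking, the closure estimate, and linear convergence with geometric summation finish the argument, with Lemma~\ref{lem:ExtendedComparison} replacing the classical comparison lemma. You simply write out in detail the steps that the paper only cites.
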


\begin{proof}
  The first assertion follows from the standard cardinality proof of \cite{Stevenson2007,CarstensenFeischlPagePraetorius2014}, with Lemma~\ref{lem:ExtendedComparison} replacing the classical comparison lemma. More precisely, the approximation class $\bbA_s^\eta$, the overlay estimate, and the standard quasi-monotonicity consequence of the axioms provide, for every adaptive triangulation $\tria_\ell$ with $\eta_{\tria_\ell}>0$, a refinement $\tria_\star\geq\tria_\ell$ such that
  \begin{align*}
    \eta_{\tria_\star}^2
    &\leq\kappa_\theta\,\eta_{\tria_\ell}^2\qquad\text{and}\qquad
    \#\tria_\star-\#\tria_\ell
    \lesssim
    \left(
      \frac{\lVert(u,f)\rVert_{\bbA_s^\eta}}
           {\eta_{\tria_\ell}}
    \right)^{1/s}.
  \end{align*}
  Lemma~\ref{lem:ExtendedComparison}, almost minimal D\"orfler marking, and the closure estimate yield the required cardinality bound. Combining this bound with linear convergence and the usual summation argument, cf.~\cite[Proof of Prop.~4.15]{CarstensenFeischlPagePraetorius2014}, proves rate-optimality with respect to the error estimator.
  If additionally reliability and efficiency hold, then $\eta_\tria\eqsim\err(\tria)$ and $\bbA_s^\eta=\bbA_s^{\err}$ with equivalent quasi-norms. Hence rate-optimality with respect to the estimator implies rate-optimality with respect to the error.
\end{proof}


\section{Estimator concentration for the Poisson model problem}
\label{sec:PMP}

In this section, we verify estimator concentration for the Poisson model problem. We first introduce the residual estimator and a total energy formulation of the error and then state the main result of this section.


\subsection{Poisson problem and total energy}
\label{subsec:PoissonSetup}

Let $\Omega\subset\RR^d$, $d\geq2$, be a bounded polyhedral Lipschitz domain and let $f\in L^2(\Omega)$. We specialize to the family $\meshes=\meshes(\tria_0)$ generated by newest-vertex bisection. In two dimensions, the initial triangulation may be equipped with matching refinement edges as in~\cite{BinevDahmenDeVore2004}; alternatively, and in arbitrary dimension, one may use the coloring-based initialization of~\cite{DieningGehringStorn2026}. We seek the weak solution $u\in H^1_0(\Omega)$ to the Poisson model problem
\begin{align}\label{eq:PMP}
  \int_\Omega\nabla u\cdot\nabla v\dx
  =\int_\Omega fv\dx
  \qquad\text{for all }v\in H^1_0(\Omega).
\end{align}
Equivalently, $u$ is the minimizer over $H^1_0(\Omega)$ of the Dirichlet energy
\begin{align}\label{eq:DirichletEnergy}
  \mathcal{J}(v)
  \coloneqq
  \tfrac12\|\nabla v\|_{L^2(\Omega)}^2-\int_\Omega fv\dx.
\end{align}
For a fixed polynomial degree $k\in\setN$ and $\tria\in\meshes$, we denote for all $T\in \tria$ by $\mathbb{P}_k(T)$ the space of polynomials on $T$ of maximal degree $k$ and define the discrete space
\begin{align*}
  V(\tria)
  \coloneqq
  \bigl\{
    v\in H^1_0(\Omega)\colon
    v|_T\in\mathbb P_k(T)\text{ for all }T\in\tria
  \bigr\}.
\end{align*}
Let $u_\tria\in V(\tria)$ denote the Galerkin solution to \eqref{eq:PMP}, equivalently the minimizer of $\mathcal{J}$ over $V(\tria)$.
We denote by $\mathcal F_\tria^{\mathrm{int}}$ the set of interior facets of $\tria$. For every $F\in\mathcal F_\tria^{\mathrm{int}}$, let $n_F$ be a fixed unit normal vector and $T^+,T^-\in\tria$ the adjacent simplices,
with $n_F$ pointing from $T^+$ to $T^-$. We define for $v\in V(\tria)$ the jump across $F\in\mathcal F_\tria^{\mathrm{int}}$ by
\begin{align*}
\llbracket \nabla v\cdot n_F\rrbracket \coloneqq (\nabla v|_{T^+}-\nabla v|_{T^-})\cdot n_F.
\end{align*}
The residual error indicator, cf.~\cite{Verfuerth1996,AinsworthOden2000}, is defined with the elementwise application of the Laplace operator $\Delta_h$ and with $h_T\coloneqq|T|^{1/d}$ for $T\in\tria$ by
\begin{align}\label{eq:PoissonEstimator}
  \eta_\tria(T)^2
  \coloneqq
  h_T^2\|f+\Delta_hu_\tria\|_{L^2(T)}^2
  +
  h_T\sum_{{F\subset\partial T \cap \mathcal F_\tria^{\mathrm{int}}}}
  \|\llbracket\nabla u_\tria\cdot n_F\rrbracket\|_{L^2(F)}^2.
\end{align}
Let $\Pi_T^{k-1}\colon L^2(T)\to\mathbb P_{k-1}(T)$ denote the $L^2(T)$-orthogonal projection. We define the data oscillation by
\begin{align}\label{eq:PoissonOscillation}
  \osc_\tria^2
  \coloneqq
  \sum_{T\in\tria}
  h_T^2\bigl\|f-\Pi_T^{k-1}f\bigr\|_{L^2(T)}^2.
\end{align}
\begin{remark}[Choice of oscillation]
  \label{rem:choice-of-oscillation}
  The polynomial degree in the definition of the data oscillation is not unique. More generally, one may replace $k-1$ in \eqref{eq:PoissonOscillation} by any fixed $r\geq\max\{0,k-2\}$. Since $\Delta u_\tria|_T\in\mathbb P_{k-2}(T)$, the corresponding oscillation is controlled by the residual estimator, while the standard efficiency estimate holds with a constant that may depend on $r$. In particular, higher-order choices such as $r=2k-2$ are also possible, cf.~\cite{CasconKreuzerNochettoSiebert2008}. We use $r=k-1$ throughout since it gives a simple formulation for all $k\geq1$ and agrees with the oscillation used in the verification of the axioms in \cite[Prop.~5.1]{CarstensenFeischlPagePraetorius2014}.
\end{remark}

For $\tria\in\meshes$, we define the \emph{total energy} $\mathcal{G}(\tria)\coloneqq \mathcal{J}(u_\tria)+\osc_\tria^2$ and $\mathcal{G}_\infty \coloneqq  \mathcal{J}(u)$.
The error quantity introduced in Section~\ref{subsec:ClassOptimality} is the corresponding total error
\begin{align}\label{eq:PoissonTotalError}
  \err(\tria)^2
  \coloneqq
  \mathcal{G}(\tria)-\mathcal{G}_\infty
  =
  \tfrac12\|\nabla(u-u_\tria)\|_{L^2(\Omega)}^2+\osc_\tria^2.
\end{align}
The last identity follows from Galerkin orthogonality. Thus, up to the immaterial normalization of the gradient error, $\err(\tria)$ is the usual total error consisting of the gradient error and the data oscillation, cf.~\cite{MekchayNochetto2005,CasconKreuzerNochettoSiebert2008}.
For nested triangulations $\tria\leq\tria_\star$, we define the discrete error difference by
\begin{align}\label{eq:PoissonDiscreteDifference}
  \delta(\tria,\tria_\star)^2
  \coloneqq
  \mathcal{G}(\tria)-\mathcal{G}(\tria_\star)
  =
  \tfrac12\|\nabla(u_{\tria_\star}-u_\tria)\|_{L^2(\Omega)}^2
  +\osc_\tria^2-\osc_{\tria_\star}^2.
\end{align}
The oscillation is monotone under refinement, and hence $\delta(\tria,\tria_\star)\geq0$. Moreover,
\begin{align}\label{eq:TotalErrorDifference}
  \delta(\tria,\tria_\star)^2
  =
  \err(\tria)^2-\err(\tria_\star)^2.
\end{align}

\begin{remark}[Relation to the maximum strategy]
  The total energy $\mathcal{G}(\tria)=\mathcal{J}(u_\tria)+\osc^2_\tria$ incorporates the data oscillation into the Dirichlet energy, so that both the total error and the discrete error difference are exact differences of $\mathcal{G}$. This energy viewpoint is closely related to the instance-optimality analysis of the adaptive maximum strategy in \cite{DieningKreuzerStevenson2016}; there, the squared oscillation is eventually replaced by a related data quantity $\mathcal H$ in the definition of the total energy $\mathcal{J}+\mathcal H$.
\end{remark}

Our choice of $\delta$ differs slightly from the discrete distance used in \cite{CarstensenFeischlPagePraetorius2014}, which for the Poisson problem consists only of the discrete gradient difference $\|\nabla(u_{\tria_\star}-u_\tria)\|_{L^2(\Omega)}$. The definition \eqref{eq:PoissonDiscreteDifference} is more natural for the present total error formulation: it identifies $\delta^2$ exactly with the decrease of the total energy and makes quasi-orthogonality an immediate telescoping property. The standard verification of the axioms requires only minor modifications. Stability and reduction follow from their corresponding estimates for the discrete gradient difference since
\begin{align*}
  \|\nabla(u_{\tria_\star}-u_\tria)\|_{L^2(\Omega)}
  \leq\sqrt2\,\delta(\tria,\tria_\star).
\end{align*}
Furthermore, \eqref{eq:TotalErrorDifference} and reliability yield quasi-orthogonality by telescoping. Discrete reliability for $\delta$ follows by the usual discrete reliability estimate together with
\begin{align*}
  0
  \leq\osc_\tria^2-\osc_{\tria_\star}^2
  \leq\sum_{T\in\tria\setminus\tria_\star}
       h_T^2\|f-\Pi_T^{k-1}f\|_{L^2(T)}^2
  \leq\eta_\tria^2(\tria\setminus\tria_\star)
\end{align*}
Thus the axioms \ref{axiom:stability}\,--\,\ref{axiom:discrete-reliability}, as well as reliability and efficiency \eqref{eq:Reliability}--\eqref{eq:Efficiency}, hold for the quantities defined above; cf.~\cite[Prop.~5.1]{CarstensenFeischlPagePraetorius2014}. In particular, we have
\begin{align*}
  \err(\tria)\eqsim\eta_\tria
  \qquad\text{for all }\tria\in\meshes.
\end{align*}
In view of Theorem~\ref{thm:ClassOptimalityAllTheta}, it remains to verify estimator concentration.

\begin{theorem}[Estimator concentration for the Poisson problem]
  \label{thm:PoissonEstimatorConcentration}
  Let $k\in\setN$ be fixed. Then the residual estimator \eqref{eq:PoissonEstimator} satisfies estimator concentration in the sense of Axiom~\ref{axiom:EstimatorConcentration}.  Consequently, for every bulk parameter $\theta\in(0,1)$, AFEM is rate-optimal with respect to the error estimator and the total error \eqref{eq:PoissonTotalError}.
\end{theorem}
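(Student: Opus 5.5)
The plan follows the strategy announced in the introduction. Set $\mathcal R\coloneqq\tria\setminus\tria_\star$ (refined elements) and $\mathcal U\coloneqq\tria\cap\tria_\star$ (unrefined elements). We have $\#\mathcal R\leq\#\tria_\star-\#\tria$, since each refined simplex is split into at least two children. We therefore always include $\mathcal R$ in $\mathcal S$, and it remains to concentrate the estimator on $\mathcal U$.

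For $T\in\mathcal U$ we compare $\eta_\tria(T)$ with $\eta_{\tria_\star}(T)$. The volume residual differs only through $\Delta_h(u_{\tria_\star}-u_\tria)$. A facet of $T$ lying in $\mathcal F^{\mathrm{int}}_\tria$ may be subdivided in $\tria_\star$ if the neighbour is refined, but the jump terms are still comparable. The elementwise triangle inequality then gives
\[
\eta_\tria(T)\leq\eta_{\tria_\star}(T)+d_T,
\]
where $d_T$ is the local discrete difference indicator. By inverse estimates, $d_T\lesssim\|\nabla e\|_{L^2(\omega_T)}$ with $e\coloneqq u_{\tria_\star}-u_\tria$ and $\omega_T$ the element patch. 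This only yields the $\ell^2$ bound $\sum_T d_T^2\lesssim\|\nabla e\|^2\lesssim\eta_\tria^2(\mathcal R)$, which is what produces the restriction $\theta<\theta_\star$.

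The key step is an improved bound
\[
\Bigl(\sum_{T\in\mathcal U}d_T^q\Bigr)^{1/q}\lesssim(\#\tria_\star-\#\tria)^{1/q-1/2}\,\eta_\tria(\mathcal R)
\]
for some $q<2$. This bound contains the number of refined elements as a factor, so it is not merely the trivial $\ell^q$-to-$\ell^2$ bound. To prove it, I would use inverse estimates with a higher integrability exponent $p>2$, so that $d_T\lesssim h_T^{d/2-d/p}\|\nabla e\|_{L^p(\omega_T)}$, and H\"older's inequality over the elements. Alternatively, and closer to what the paper suggests, I would write $\eta$-differences via fractional seminorms $d_T\lesssim h_T^{s}|\nabla e|_{\dots}$ and use that $e=R_{\tria_\star}(u_{\tria_\star}-\dots)$ has a Galerkin-orthogonality structure: $e$ is the Ritz projection onto $V(\tria_\star)$ of a residual supported near $\mathcal R$. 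Concretely, $e=P_{\tria_\star}(u-u_\tria)-P_\tria(u-u_\tria)$-type identities, together with stability of the Ritz projection in $W^{1+\sigma,q}$-type or Besov-type fractional spaces (obtained from Scott--Zhang stability, deferred to the appendix), should give $e$ small in a norm where $\ell^q$ summation over $\mathcal U$ costs only the size of $\mathcal R$. I expect this step, namely getting a genuine $q<2$ gain with constants independent of the mesh grading, to be the main obstacle. My first attempt would be interpolation between the energy bound and a $W^{1,1}$-type bound localized to the support of the residual functional on $\mathcal R$.

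Given the $\ell^q$ bound, the concentration follows by a best $n$-term argument. Let $\mathcal S_0\subset\mathcal U$ be the $n\coloneqq A(\#\tria_\star-\#\tria)$ elements with the largest $d_T$. For $q<2$, the standard Stechkin estimate gives
\[
\sum_{T\in\mathcal U\setminus\mathcal S_0}d_T^2\leq n^{1-2/q}\Bigl(\sum d_T^q\Bigr)^{2/q}\lesssim A^{1-2/q}\eta_\tria^2(\mathcal R).
\]
Choosing $A=A_\epsilon$ large makes the right-hand side at most $\tfrac{\epsilon}{2}\eta_\tria^2$. Set $\mathcal S\coloneqq\mathcal R\cup\mathcal S_0$. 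Then, using $(a+b)^2\leq(1+\tfrac1\mu)a^2+(1+\mu)b^2$,
\[
\eta_\tria^2(\tria\setminus\mathcal S)\leq 2\eta_{\tria_\star}^2+2\sum_{\mathcal U\setminus\mathcal S_0}d_T^2\leq\epsilon\eta_\tria^2+2\eta_{\tria_\star}^2,
\]
and $\#\mathcal S\leq(1+A_\epsilon)(\#\tria_\star-\#\tria)$. This proves the axiom, and the final claim follows from Theorem~\ref{thm:ClassOptimalityAllTheta} together with the verified axioms, reliability and efficiency.
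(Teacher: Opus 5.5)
\paragraph{Gap: the key $\ell^q$ bound.} Your outer framework coincides with the paper's, and that part is correct. You put $\tria\setminus\tria_\star$ into $\mathcal S$, compare indicators on unrefined elements, and turn an $\ell^q$ bound with $q<2$ into concentration via a best $n$-term argument; this is exactly Lemma~\ref{lem:EstimatorChangeCriterion} with Lemma~\ref{lem:BestNterm}. A small correction: for conforming refinements the facets of an unrefined $T$ are not subdivided, so $|\eta_\tria(T)-\eta_{\tria_\star}(T)|\leq\eta^{\rm hom}_{\tria_\star}(e,T)$ holds exactly.

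The gap is that the whole content of the theorem is the bound $(\sum_{T\in\mathcal U}d_T^q)^{1/q}\lesssim(\#\tria_\star-\#\tria)^{1/q-1/2}\eta_\tria(\mathcal R)$. For it you offer only heuristics, and none of them closes:
\begin{itemize}
\item The $L^p$-inverse-estimate-plus-H\"older route cannot produce the factor $(\#\tria_\star-\#\tria)^{1/q-1/2}$. Since $e$ is globally supported, for $q<2$ H\"older over the elements leaves $\sum_T h_T^{\gamma}$ with $\gamma<d$, i.e.\ a power of the total number of elements. In addition, bounding $\|\nabla e\|_{L^p}$ by $\|\nabla e\|_{L^2}$ costs mesh-dependent factors.
\item An endpoint $W^{1,1}$-type bound is not available. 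Uniform $W^{1,p}$-stability of Ritz projections would entail $W^{1,p}$-well-posedness of the continuous Dirichlet problem, which on polyhedral Lipschitz domains holds in general only for $p$ near $2$.
\item The identity $e=P_{\tria_\star}(u-u_\tria)-P_\tria(u-u_\tria)$ is a tautology, since $P_\tria(u-u_\tria)=0$. It exhibits no sparse object to which a stability estimate could be applied.
\end{itemize}

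\paragraph{Missing ingredients.} Three concrete ingredients are needed.
\begin{enumerate}
\item \emph{A sparse representative.} Take the Scott--Zhang operator $\Pi_\tria^{\tria_\star}$ adapted to the pair of meshes and set $z\coloneqq e-\Pi_\tria^{\tria_\star}e$. Then $z$ vanishes on $\tria\cap\tria_\star$, lives on at most $2(\#\tria_\star-\#\tria)$ fine elements, and satisfies $\|\nabla z\|_{L^2(\Omega)}\eqsim\|\nabla e\|_{L^2(\Omega)}$. Because Galerkin orthogonality gives $R_\tria e=0$, it reconstructs $e=(\operatorname{Id}-R_\tria)z$ through the \emph{coarse} Ritz projection.
\item \emph{The critical scale.} Choose $\sigma=d/q-d/2$ with $\sigma q<1$. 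At this scale both the residual bound $\sum_{T\in\tria_\star}\eta^{\rm hom}_{\tria_\star}(e,T)^q\lesssim|\nabla e|_{W^{\sigma,q}(\Omega)}^q$ and the sparse Bernstein estimate $|\nabla z|_{W^{\sigma,q}(\Omega)}\lesssim(\#\tria_\star-\#\tria)^{1/q-1/2}\|\nabla z\|_{L^2(\Omega)}$ hold without mesh-size factors. This exponent matching is precisely where the factor comes from.
\item \emph{Uniform fractional Ritz stability.} One needs $|\nabla R_\tria v|_{W^{\sigma,q}(\Omega)}\lesssim|\nabla v|_{W^{\sigma,q}(\Omega)}$ on arbitrarily graded meshes. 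Scott--Zhang stability does not directly yield this. The paper applies Shneiberg's extrapolation of invertibility to the block-diagonalized form $B_\tria$, both in integrability near $p=2$ and in smoothness $H^{1\pm s}$, and then uses real interpolation.
\end{enumerate}
Chaining these gives $(\sum_{T\in\mathcal U}d_T^q)^{1/q}\lesssim|\nabla e|_{W^{\sigma,q}}\lesssim|\nabla z|_{W^{\sigma,q}}\lesssim(\#\tria_\star-\#\tria)^{1/q-1/2}\|\nabla e\|_{L^2}$. Discrete reliability then yields the bound you assumed.
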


The proof of Theorem~\ref{thm:PoissonEstimatorConcentration} occupies the remainder of this section. We first show that estimator concentration follows from an $\ell^q$ estimate, for some $q<2$, for the changes of the local error indicators on unrefined elements. The main analytic ingredient for this estimate is fractional Sobolev stability of the Ritz projection. To exploit this stability, we represent the difference $u_{\tria_\star}-u_\tria$ through a finite element function supported on only $\mathcal O(\#\tria_\star-\#\tria)$ elements. A best $n$-term argument then converts the resulting $\ell^q$ estimate into estimator concentration.


\subsection{\texorpdfstring{An $\ell^q$-criterion for estimator concentration}{An lq-criterion for estimator concentration}}\label{subsec:EstimatorChanges}

We first reduce estimator concentration to an $\ell^q$ estimate for the changes of the local error indicators. The essential point is that the exponent $q$ is strictly smaller than two.
Note that this approach does not use any specific properties of our discretization and thus applies to any problem fitting into the abstract framework of the axioms in Section~\ref{subsec:FourAxioms}.

\begin{lemma}[Estimator-change criterion]
  \label{lem:EstimatorChangeCriterion}
  Assume~\ref{axiom:discrete-reliability} is true and suppose that there exist $q\in(1,2)$ and $C_{\rm ch}<\infty$ such that, for all $\tria,\tria_\star\in\meshes$ with $\tria\leq\tria_\star$,
  \begin{align}\label{eq:EstimatorChangeCriterion}
    \Big(
      \sum_{T\in\tria\cap\tria_\star}
      \left|\eta_\tria(T)-\eta_{\tria_\star}(T)\right|^q
    \Big)^{  1/q}
    \leq
    C_{\rm ch}
    \bigl(\#\tria_\star-\#\tria\bigr)^{\frac 1q- \frac 12}
    \delta(\tria,\tria_\star).
  \end{align}
  Then estimator concentration, as formulated in Axiom~\ref{axiom:EstimatorConcentration}, holds.
\end{lemma}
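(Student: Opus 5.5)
Fix $\epsilon>0$ and $\tria\leq\tria_\star$, and write $N\coloneqq\#\tria_\star-\#\tria$. We may assume $N\geq1$: if $N=0$ then $\tria=\tria_\star$, $\delta=0$, and $\mathcal S=\emptyset$ works with $B_\epsilon\geq1$. The set $\mathcal S$ will consist of two parts.

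\emph{First part.} Take all refined elements $\tria\setminus\tria_\star$. Every refined element produces at least one extra element, so $\#(\tria\setminus\tria_\star)\leq N$.

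\emph{Second part.} Take the $n\coloneqq\lceil MN\rceil$ elements $T\in\tria\cap\tria_\star$ with the largest changes $d_T\coloneqq|\eta_\tria(T)-\eta_{\tria_\star}(T)|$, where $M=M_\epsilon$ is a factor chosen below. Call this set $\mathcal S_2$ and set $\mathcal S\coloneqq(\tria\setminus\tria_\star)\cup\mathcal S_2$. Then $\#\mathcal S\leq(1+M+1)N$, so $A_\epsilon\coloneqq M+2$ suffices.

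\emph{Estimating the unselected contribution.} Let $\mathcal R\coloneqq(\tria\cap\tria_\star)\setminus\mathcal S_2$. For $T\in\mathcal R$ we have $\eta_\tria(T)\leq\eta_{\tria_\star}(T)+d_T$, hence
\begin{align*}
  \eta_\tria^2(\tria\setminus\mathcal S)
  \leq 2\eta_{\tria_\star}^2+2\sum_{T\in\mathcal R}d_T^2.
\end{align*}
The tail sum is controlled by the standard best $n$-term (Stechkin) bound: if $(d_T^*)$ denotes the decreasing rearrangement, then $d^*_j\leq j^{-1/q}\|d\|_{\ell^q}$. Summing over $j>n$ gives
\begin{align*}
  \sum_{j>n}(d_j^*)^2\leq \|d\|_{\ell^q}^2\sum_{j>n}j^{-2/q}\lesssim n^{1-2/q}\|d\|_{\ell^q}^2,
\end{align*}
where the series converges precisely because $q<2$.

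\emph{Combining with \eqref{eq:EstimatorChangeCriterion}.} The criterion gives $\|d\|_{\ell^q}^2\leq C_{\rm ch}^2N^{2/q-1}\delta(\tria,\tria_\star)^2$. Therefore
\begin{align*}
  \sum_{T\in\mathcal R}d_T^2\lesssim (MN)^{1-2/q}N^{2/q-1}\delta^2=M^{1-2/q}\delta^2.
\end{align*}
The powers of $N$ cancel, which is the crux of the argument; it relies on the exponent $\frac1q-\frac12$ in \eqref{eq:EstimatorChangeCriterion} being exactly matched. Then \ref{axiom:discrete-reliability} gives $\delta^2\leq C_4\eta_\tria^2$, so
\[
  \sum_{T\in\mathcal R}d_T^2\lesssim M^{1-2/q}C_4\,\eta_\tria^2.
\]
Since $1-2/q<0$, choosing $M$ large makes $2\sum_{T\in\mathcal R}d_T^2\leq\epsilon\eta_\tria^2$. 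Together with $B_\epsilon\coloneqq2$, this gives the axiom.

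The main obstacle is the $N$-cancellation in the best $n$-term step: one must choose the count $n$ proportional to $N$, not absolute. Everything else is routine. A rounding check: $n$ may exceed $\#(\tria\cap\tria_\star)$, in which case $\mathcal R=\emptyset$ and the bound is trivial.
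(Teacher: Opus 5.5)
Your proof is correct and follows the paper's argument essentially step by step. You use the same set $\mathcal S=(\tria\setminus\tria_\star)\cup\mathcal S_2$, with $\mathcal S_2$ the $\lceil MN\rceil$ common elements of largest indicator change, and a best $n$-term tail bound with $n$ proportional to $\#\tria_\star-\#\tria$ so that the powers cancel. Discrete reliability~\ref{axiom:discrete-reliability} then gives the same constants $A_\epsilon=M+2$ and $B_\epsilon=2$.

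The only differences are cosmetic. You bound the tail via $d_j^*\leq j^{-1/q}\|d\|_{\ell^q}$ and a convergent series, which gives a $q$-dependent constant, and you square pointwise. The paper instead uses $(a_{n+1}^\star)^{2-q}\sum_j(a_j^\star)^q$, which has constant one, together with the reverse triangle inequality in $\ell^2$.
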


The proof is based on the following best $n$-term estimate, cf.~\cite[Sec.~2]{DeVore1998}.

\begin{lemma}[Best $n$-term approximation]
  \label{lem:BestNterm}
  Let $q\in(0,2)$, let $(a_i)_{i\in I}\subset\RR_{\geq0}$ be a finite sequence, and let $n\in\mathbb N$. If $\#I\leq n$, set $I_n\coloneqq I$. Otherwise, let $I_n\subseteq I$ contain the indices of the $n$ largest values $a_i$. Then
  \begin{align*}
    \Big(
      \sum_{i\in I\setminus I_n}a_i^2
    \Big)^{1/2}
    \leq
    n^{\frac 12- \frac 1q}
    \Big(
      \sum_{i\in I}a_i^q
    \Big)^{1/q}.
  \end{align*}
\end{lemma}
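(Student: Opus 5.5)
The plan is the classical Stechkin-type argument. The case $\#I\le n$ is trivial, since then $I\setminus I_n=\emptyset$ and the left-hand side vanishes. So assume $\#I>n$. Order the values decreasingly as $a_1^*\ge a_2^*\ge\dots\ge a_{\#I}^*$, so that $I_n$ corresponds to $a_1^*,\dots,a_n^*$ and $I\setminus I_n$ to the tail $a_{n+1}^*,a_{n+2}^*,\dots$. Write $S\coloneqq\sum_{i\in I}a_i^q$.

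The first key step is a uniform bound on the tail. Every tail element is bounded by $a_n^*$, and by monotonicity we have $n\,(a_n^*)^q\le\sum_{j=1}^n (a_j^*)^q\le S$. Hence $a_i\le (S/n)^{1/q}$ for all $i\in I\setminus I_n$.

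The second step uses $2-q>0$ and interpolates between this $\ell^\infty$ bound and the $\ell^q$ sum:
\begin{align*}
  \sum_{i\in I\setminus I_n}a_i^2
  =\sum_{i\in I\setminus I_n}a_i^{2-q}a_i^q
  \le\Bigl(\frac{S}{n}\Bigr)^{\frac{2-q}{q}}\sum_{i\in I\setminus I_n}a_i^q
  \le n^{1-\frac 2q}\,S^{\frac{2-q}{q}+1}
  = n^{1-\frac2q}\,S^{\frac 2q}.
\end{align*}
Taking square roots then gives exactly $n^{\frac12-\frac1q}\bigl(\sum_{i\in I}a_i^q\bigr)^{1/q}$.

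There is no real obstacle. The only points to watch are that ties in the selection of the $n$ largest values do not matter, since any choice of $I_n$ still bounds the tail by $a_n^*$, and that the exponent $2-q$ is positive, which is needed to transfer the pointwise bound.
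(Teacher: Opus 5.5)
Your proof is correct and is essentially the paper's argument. You bound the tail pointwise via $n\,(a^\star)^q\le\sum_{i\in I}a_i^q$ and then write $a_i^2=a_i^{2-q}a_i^q$, using $2-q>0$. The only cosmetic difference is that the paper uses $a_{n+1}^\star$ as the threshold where you use $a_n^\star$, which changes nothing.
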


\begin{proof}
  If $\#I\leq n$, there is nothing to prove. Otherwise, let $a_1^\star\geq a_2^\star\geq\cdots$ denote the decreasing rearrangement of $(a_i)_{i\in I}$. Then
  \begin{align*}
    n(a_{n+1}^\star)^q
    \leq
    \sum_{i\in I}a_i^q,
  \end{align*}
  and therefore
  \begin{align*}
    \sum_{i\in I\setminus I_n}a_i^2
    =
    \sum_{j>n}(a_j^\star)^2
    \leq
    (a_{n+1}^\star)^{2-q}
    \sum_{j>n}(a_j^\star)^q
    \leq
    n^{1- \frac 2q}
    \Big(\sum_{i\in I}a_i^q\Big)^{2/q}.
  \end{align*}
  Taking square roots proves the assertion.
\end{proof}

\begin{proof}[Proof of Lemma~\ref{lem:EstimatorChangeCriterion}]
  Fix $\epsilon>0$ and let $\tria\leq\tria_\star$ with $\tria,\tria_\star \in \meshes$. Set
  \begin{align*}
    m\coloneqq\#\tria_\star-\#\tria
    \qquad
\text{and}\qquad    \alpha\coloneqq 1/q- 1/2>0.
  \end{align*}
  
  \textit{Step 1 (The set $\mathcal{S}$).}
  Since the case $m=0$ is trivial, we assume $m\geq1$. We set 
  \begin{align*}
R_\epsilon \coloneqq \max\lbrace 1,     (C_{\rm ch} \sqrt{C_4} \sqrt{2/\epsilon})^{1/\alpha} \rbrace \qquad\text{and}\qquad n \coloneqq \lceil R_\epsilon m \rceil. 
  \end{align*}
 Let $\mathcal B_n=\tria\cap\tria_\star$ if $\#(\tria\cap\tria_\star)\leq n$. Otherwise, let $\mathcal B_n\subseteq\tria\cap\tria_\star$ consist of the elements corresponding to the $n$ largest values of $\left|\eta_\tria(T)-\eta_{\tria_\star}(T)\right|$.
We define
  \begin{align*}
    \mathcal S
    \coloneqq
    (\tria\setminus\tria_\star)\cup\mathcal B_n\qquad\text{and obtain}\qquad
    \#\mathcal S
    \leq
    m+n
    \leq
    (R_\epsilon+2)m.
    \end{align*}
    
  \textit{Step 2 (Estimator concentration).}
Since $\tria\setminus\mathcal S=(\tria\cap\tria_\star)\setminus\mathcal B_n$, the reverse triangle inequality in $\ell^2$, Lemma~\ref{lem:BestNterm}, \eqref{eq:EstimatorChangeCriterion}, and \ref{axiom:discrete-reliability} give
  \begin{align*}
    \eta_\tria(\tria\setminus\mathcal S)
    &\leq
    \eta_{\tria_\star}(\tria\setminus\mathcal S)
    +
    \Big(
      \sum_{T\in\tria\setminus\mathcal S}
      |\eta_\tria(T)-\eta_{\tria_\star}(T)|^2
    \Big)^{1/2}
    \\
& \leq     \eta_{\tria_\star}(\tria\setminus\mathcal S)
    +
 n^{-\alpha}   
        \Big(
      \sum_{T\in\tria\cap \tria_\star}
      |\eta_\tria(T)-\eta_{\tria_\star}(T)|^q
    \Big)^{1/q} \\
    &\leq
    \eta_{\tria_\star}
    +
    n^{-\alpha}
    C_{\rm ch}m^\alpha
    \delta(\tria,\tria_\star) \leq \eta_{\tria_\star}
    +
    \sqrt{\epsilon/2}\,\eta_\tria.
  \end{align*}
  Consequently, we obtain
  \begin{align*}
    \eta_\tria^2(\tria\setminus\mathcal S)
    \leq
    \epsilon\,\eta_\tria^2
    +
    2\,\eta_{\tria_\star}^2.
  \end{align*}
  This proves estimator concentration with $A_\epsilon\coloneqq R_\epsilon+2$ and $B_\epsilon\coloneqq 2$.
\end{proof}

In view of Lemma~\ref{lem:EstimatorChangeCriterion}, it remains to prove \eqref{eq:EstimatorChangeCriterion} for some $q<2$. The key idea is to replace the possibly globally supported Galerkin difference by a sparse projection error with comparable energy norm and recover the difference by a Ritz reconstruction. We first establish the fractional stability of the Ritz projection needed to transfer fractional estimates through this reconstruction. 


\subsection{Fractional Ritz stability}
\label{sec:fract-ritz-stab}

Let $R_\tria$ denote the Ritz projection onto $V(\tria)$ with $\tria\in\meshes$, defined by
\begin{align}\label{eq:DefRitz}
  \int_\Omega\nabla R_\tria v\cdot\nabla\xi\dx
  =
  \int_\Omega\nabla v\cdot\nabla\xi\dx
  \qquad\text{for all }v\in W^{1,1}(\Omega) \text{ and }\xi\in V(\tria).
\end{align}
It is therefore stable in the $H^1_0(\Omega)$-norm. For the estimator-change estimate \eqref{eq:EstimatorChangeCriterion}, we need stability slightly beyond this energy scale. 
To make this statement precise, we set for $0<\sigma<1$, $1<q<\infty$, $\omega\subseteq\Omega$, and $g\in L^q(\omega)$ the Sobolev--Slobodeckij seminorm
\begin{align}\label{eq:def-Wsq}
  \abs{g}_{W^{\sigma,q}(\omega)}^q
  \coloneqq
  \int_\omega\int_\omega
  \frac{|g(x)-g(y)|^q}{|x-y|^{d+\sigma q}}
  \dy\dx.
\end{align}
The main result of this subsection provides stability of $\nabla R_\tria$ in $W^{\sigma,q}(\Omega)$ for some $q<2$ and the critical fractional differentiability $\sigma= d/q- d/2$.

\begin{remark}[Scaling]
  \label{rem:scaling}
  The choice $\sigma- d/q=- d/2$ is the critical scaling corresponding to the Sobolev embedding $W^{\sigma,q}(\omega)\hookrightarrow L^2(\omega)$. This embedding holds, in particular, if $\omega$ is a bounded $W^{\sigma,q}$-extension domain, and hence for bounded Lipschitz domains; cf.~\cite[Thms.~5.4 and~6.7]{DiNezzaPalatucciValdinoci2012}. Moreover, under dilations the $W^{\sigma,q}(\omega)$ seminorm scales with exponent $d/q-\sigma= d/2$, exactly as the $L^2(\omega)$ norm, which we exploit below.
\end{remark}
Classical $W^{1,p}(\Omega)$ stability of the Ritz projection on quasi-uniform meshes goes back to Rannacher and Scott~\cite{RannacherScott1982}; see also the recent pointwise gradient estimate of~\cite{DieningRolfesSalgado2024}. Related results include fractional $H^{1+s}(\Omega)$ stability under quasi-uniform mesh assumptions~\cite[Lem.~A.3]{GuermondPasciak2008} and $W^{1,p}(\Omega)$ stability for $p$ close to two for piecewise affine elements on shape-regular triangulations of convex polygons~\cite{BadrRuss2014}. The latter is obtained by interpolation of invertibility. We combine this principle with fractional Scott--Zhang stability to obtain the required $W^{1+\sigma,q}(\Omega)$ estimate uniformly for all $\tria\in\meshes$ without  quasi-uniformity assumptions.

\begin{lemma}[Ritz stability]
  \label{lem:PoissonCriticalRitz}
  Let $k\in\mathbb N$. The following stability estimates hold uniformly for all $\tria\in \meshes$.
  \begin{enumerate}[itemsep=0.3em]
  \item\label{itm:Ritz-W1p} There exists an $\epsilon>0$ such that, for all $1<p<\infty$ with $\abs{ 1/p- 1/2}\leq\epsilon$,
    \begin{align*}
      \norm{\nabla R_\tria v}_{L^p(\Omega)}
      \lesssim
      \norm{\nabla v}_{L^p(\Omega)}
      \qquad\text{for all }v\in W^{1,p}_0(\Omega).
    \end{align*}
  \item\label{itm:Ritz-Hs} There exists $\tau\in(0,  1/2)$ such that, for all $0\leq s\leq\tau$,
    \begin{align*}
      \norm{R_\tria v}_{H^{1+s}(\Omega)}
      \lesssim
      \norm{v}_{H^{1+s}(\Omega)}
      \qquad\text{for all }v\in H^{1+s}(\Omega)\cap H^1_0(\Omega).
    \end{align*}
  \item\label{itm:Ritz-Wsq} There exist $q\in(1,2)$ and $\sigma\coloneqq d/q- d/2>0$ with $\sigma q<1$ such that
    \begin{align*}
      \abs{\nabla R_\tria v}_{W^{\sigma,q}(\Omega)}
      \lesssim
      \abs{\nabla v}_{W^{\sigma,q}(\Omega)}
      \qquad\text{for all }v\in W^{1+\sigma,q}(\Omega)\cap W^{1,q}_0(\Omega).
    \end{align*}
  \end{enumerate}
  The constants $\epsilon$ and $\tau$ depend on the domain $\Omega$, the shape regularity of $\meshes$, and the polynomial degree $k$. The hidden constants additionally depend on the exponents.
\end{lemma}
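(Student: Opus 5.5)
The three assertions will be proved in order, each feeding the next. The overall strategy is to interpolate between two kinds of bounds: a uniform Hölder-type or Sobolev-type bound that is only barely off the energy scale, and exact isometry-type information at the energy scale $H^1_0$.

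\emph{Part~\ref{itm:Ritz-W1p}.} For $p=2$ the Ritz projection is an orthogonal projection, so $\norm{\nabla R_\tria v}_{L^2}\le\norm{\nabla v}_{L^2}$. To move $p$ slightly away from two, I would use the Šneĭberg stability-of-invertibility principle, as in~\cite{BadrRuss2014}.

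The Ritz projection factors as follows: $\nabla R_\tria v$ solves the discrete problem whose operator is $A_\tria\colon V(\tria)\to V(\tria)^*$, $\langle A_\tria w,\xi\rangle=\int\nabla w\cdot\nabla\xi$. The complex interpolation scales are
\[
X_p\coloneqq (V(\tria),\norm{\nabla\cdot}_{L^p})
\qquad\text{and}\qquad
Y_p\coloneqq (V(\tria),\norm{\nabla\cdot}_{L^{p'}})^*.
\]
These form interpolation scales uniformly in $\tria$. The reason is that the Scott--Zhang operator is a uniformly $W^{1,p}$-stable projection onto $V(\tria)$ for all $p$, which makes $V(\tria)$ a uniformly complemented subspace of $W^{1,p}_0$.

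The operator $A_\tria\colon X_p\to Y_p$ is bounded uniformly in $p$ and $\tria$. It is invertible at $p=2$ with norm $1$. Šneĭberg's theorem then gives invertibility for $|1/p-1/2|\le\epsilon$, with $\epsilon$ depending only on these uniform constants.

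Combined with the bound $\norm{\int\nabla v\cdot\nabla(\cdot)}_{Y_p}\le\norm{\nabla v}_{L^p}$, this yields~\ref{itm:Ritz-W1p}.

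\emph{Part~\ref{itm:Ritz-Hs}.} The same scheme applies on the scale $H^{1+s}$, $s\in[-\tau,\tau]$; negative $s$ are handled by duality. The required ingredients are:
\begin{itemize}
\item fractional stability of the Scott--Zhang interpolation from Appendix~\ref{app:SZFractionalStability}, for $|s|<1/2$, where piecewise polynomials still belong to $H^{1+s}$;
\item the fact that $V(\tria)$, equipped with the $H^{1+s}$ norm, is uniformly complemented in $H^{1+s}\cap H^1_0$.
\end{itemize}
Again $A_\tria$ is an isometric isomorphism at $s=0$, so Šneĭberg's theorem gives a uniform $\tau$.

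\emph{Part~\ref{itm:Ritz-Wsq}.} I would interpolate the two previous bounds, \ref{itm:Ritz-W1p} at some $p_0<2$ and \ref{itm:Ritz-Hs} at $s=\tau$, via complex interpolation of the linear operator $v\mapsto R_\tria v$. The identification I have in mind is
\[
[W^{1,p_0},\,H^{1+\tau}]_\vartheta \supseteq W^{1+\vartheta\tau,q},
\qquad
\frac1q=\frac{1-\vartheta}{p_0}+\frac{\vartheta}{2},
\]
where interpolation of Bessel-potential spaces gives $H^{1+\vartheta\tau,q}$. This must then be converted to the Slobodeckij spaces $W^{1+\sigma,q}$ with a small loss in $\sigma$, or else real interpolation should be used, which produces Besov spaces $B^{\sigma}_{q,q}=W^{\sigma,q}$ directly. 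Real interpolation is the better choice.

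The exponents must satisfy the critical relation $\sigma=\vartheta\tau=d/q-d/2=d(1-\vartheta)(1/p_0-1/2)$. Since $1/p_0-1/2\le\epsilon$ can be taken as small as desired, for any fixed $\vartheta\in(0,1)$ I choose $1/p_0-1/2=\vartheta\tau/(d(1-\vartheta))$, which is admissible once this quantity is at most $\epsilon$. Taking $\vartheta$ small ensures $\sigma q<1$ and $q\in(1,2)$.

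Two further points need care. First, homogeneous seminorms have to be recovered: using the dilation scaling of Remark~\ref{rem:scaling} is not available on a fixed domain, so I would instead estimate $\norm{\cdot}$ and then remove lower-order terms by Poincaré-type and embedding arguments. This works because $v$ vanishes on the boundary. Second, all endpoint constants must remain uniform in $\tria$; they do, since every constant above depends only on shape regularity.

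The main obstacle is the rigorous identification of interpolation spaces for subspaces with boundary conditions. This covers both the interpolation of the $W^{1,p}_0$ and $H^{1+s}\cap H^1_0$ spaces for \ref{itm:Ritz-Wsq}, and the uniform complementation of the discrete spaces in fractional norms, which rests on the appendix's Scott--Zhang result and the restriction $\sigma q<1$, or $s<1/2$, so that no traces of gradients arise.
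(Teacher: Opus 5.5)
Your proposal is correct. For part (c) it coincides with the paper's Step~5: real interpolation between $W^{1,p_0}_0$ and $H^{1+\tau}_D$, the same exponent relation $\vartheta\tau=(1-\vartheta)\,d\,(1/p_0-1/2)$, and Poincar\'e inequalities (using $\int_\Omega\nabla v\,\mathrm{d}x=0$) to pass to seminorms.

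Where you differ is in how Shneiberg's theorem is set up for (a) and (b).
\begin{itemize}
\item You apply it to the discrete operator $A_\tria$ on $V(\tria)$, equipped with the $\tria$-dependent norms $\|\nabla\cdot\|_{L^p}$ (resp.\ $H^{1\pm s}$) and their duals. You get $\tria$-uniform interpolation identities for these finite-dimensional couples by a retraction argument, with the Scott--Zhang projection as a common bounded projection.
\item The paper keeps the fixed continuous scales $W^{1,p}_0/W^{-1,p}$ and $H^{1\pm s_0}_D/(H^{1\mp s_0}_D)^*$ and moves the $\tria$-dependence into the operator. The block-diagonal form $B_\tria$ is coercive on $H^1_0$ and uniformly bounded on the whole scales. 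It recovers the Ritz projection through $B_\tria R_\tria v=\ell_v$ with $\ell_v(z)=\int_\Omega\nabla v\cdot\nabla\Pi_\tria z\,\mathrm{d}x$.
\end{itemize}

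Both routes use the same inputs: simultaneous boundedness of $\Pi_\tria$ at both endpoints and the known interpolation identities for the continuous spaces.

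The paper's version needs only off-the-shelf identities and never interpolates $\tria$-dependent spaces. The price is the auxiliary operator and the duality extension of $B_\tria$ to the fractional scale.

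Your version is leaner. The inverse bound is just a discrete inf--sup constant, and finite-dimensionality makes the duality theorem for your $Y$-spaces automatic. The cost is that you must carry out the retraction argument explicitly. You must also apply the quantitative Shneiberg theorem in the interpolation norms, which are only uniformly equivalent to your $X_p,Y_p$ norms. This is harmless, since the equivalence constants enter only the endpoint bounds and the lower bound at $p=2$.

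Two minor slips:
\begin{itemize}
\item For negative $s$ the relevant space is $H^{1+s}_D$, the zero-trace subspace of $H^{1-s_0}$, not $H^{1+s}\cap H^1_0$.
\item $A_\tria$ is an isometry at $s=0$ only with respect to $\|\nabla\cdot\|_{L^2}$. With the full $H^1$ norm it is an isomorphism with Poincar\'e-dependent constants.
\end{itemize}
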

\begin{proof}
  Let $\Pi_\tria$ denote the trace-preserving Scott--Zhang projection from Appendix~\ref{app:SZFractionalStability}. Its standard stability gives, for every $1<\rho<\infty$,
  \begin{align}\label{eq:aux-SZ-W1rho}
    \norm{\nabla\Pi_\tria v}_{L^\rho(\Omega)}
    \lesssim
    \norm{\nabla v}_{L^\rho(\Omega)}
    \qquad\text{for all }v\in W^{1,\rho}(\Omega).
  \end{align}
  Moreover, Lemma~\ref{lem:SZFractionalStability} yields, whenever $1<\rho<\infty$ and $0<s< 1/\rho$,
  \begin{align*}
    \abs{\nabla\Pi_\tria v}_{W^{s,\rho}(\Omega)}
    \lesssim
    \abs{\nabla v}_{W^{s,\rho}(\Omega)}.
  \end{align*}
  We first define a bilinear form on $H^1_0(\Omega)\times H^1_0(\Omega)$ for all $v,z\in H^1_0(\Omega)$ by
  \begin{align}\label{eq:def-Btria}
    (B_\tria v)(z)
    \coloneqq
    \int_\Omega\nabla\Pi_\tria v\cdot\nabla\Pi_\tria z\dx
    +
    \int_\Omega\nabla(v-\Pi_\tria v)\cdot\nabla(z-\Pi_\tria z)\dx.
  \end{align}
  The operator $B_\tria$ is the block diagonalization of the elliptic form with respect to the decomposition induced by $\Pi_\tria$, in the spirit of classical subspace correction methods, cf.~\cite{Xu1992}. The interpolation-of-invertibility argument below is related to the strategy for discrete Meyers estimates in \cite[Sec.~4.2]{BadrRuss2014}. We use Shneiberg's interpolation theorem \cite{Shneiberg1973} in the quantitative form of \cite[Thm.~3.1]{MeinlschmidtRehberg2021}.

  \textit{Step 1 (Invertibility in the energy space).}
  The stability estimate \eqref{eq:aux-SZ-W1rho} for $\rho=2$ shows that $B_\tria\colon H^1_0(\Omega)\to H^{-1}(\Omega)$ is uniformly bounded. Moreover, we have for all $v\in H^1_0(\Omega)$
  \begin{align*}
    \tfrac12\norm{\nabla v}_{L^2(\Omega)}^2
    \leq
    \norm{\nabla\Pi_\tria v}_{L^2(\Omega)}^2
    +
    \norm{\nabla(v-\Pi_\tria v)}_{L^2(\Omega)}^2
    =
    (B_\tria v)(v).
  \end{align*}
  Hence the Lax--Milgram theorem yields uniform invertibility of
  \begin{align}\label{eq:B-energy-inverse}
    B_\tria\colon H^1_0(\Omega)\to H^{-1}(\Omega).
  \end{align}

  \textit{Step 2 (Variation of integrability).}
  Let $1<p<\infty$. By combining \eqref{eq:aux-SZ-W1rho} and H\"older's inequality, we obtain for all $v\in W^{1,p}_0(\Omega)$ and $z\in W^{1,p'}_0(\Omega)$
  \begin{align*}
    \abs{(B_\tria v)(z)}
    \lesssim
    \norm{\nabla v}_{L^p(\Omega)}
    \norm{\nabla z}_{L^{p'}(\Omega)}.
  \end{align*}
  Thus $B_\tria\colon W^{1,p}_0(\Omega)\to W^{-1,p}(\Omega)$ is uniformly bounded for $p$ in every compact subinterval of $(1,\infty)$. Since the spaces $W^{1,p}_0(\Omega)$ and $W^{-1,p}(\Omega)$ form complex interpolation scales and \eqref{eq:B-energy-inverse} is uniformly invertible for $p=2$, the quantitative Shneiberg theorem yields the existence of an $\epsilon>0$, independent of $\tria$, such that
  \begin{align}\label{eq:BT-W1p-invertible}
    B_\tria\colon W^{1,p}_0(\Omega)\to W^{-1,p}(\Omega)
    \quad\text{is uniformly invertible if }
    \bigabs{\tfrac1p-\tfrac12}\leq\epsilon.
  \end{align}

  \textit{Step 3 (Variation of smoothness).}
  We use the Bessel potential spaces $H^{t,p}(\Omega)=F^t_{p,2}(\Omega)$. For $1/p<t<1+1/p$, let $H_D^{t,p}(\Omega)$ denote the subspace with homogeneous Dirichlet trace. In particular, we have $H_D^{1,p}(\Omega)=W^{1,p}_0(\Omega)$. We abbreviate $H_D^t(\Omega)\coloneqq H_D^{t,2}(\Omega)$ and fix $s_0\in(0,1/2)$. Lemma~\ref{lem:SZFractionalStability}, applied with $q=2$ and $t=1\pm s_0$, together with the preservation of homogeneous Dirichlet traces, gives
  \begin{align*}
    \norm{\Pi_\tria v}_{H_D^{1\pm s_0}(\Omega)}
    \lesssim
    \norm{v}_{H_D^{1\pm s_0}(\Omega)}.
  \end{align*}
  Since $s_0<1/2$, the spaces $H^{s_0}_0(\Omega)$ and $H^{s_0}(\Omega)$ coincide with equivalent norms. Consequently, differentiation maps $H^{1-s_0}(\Omega)$ boundedly into $H^{-s_0}(\Omega)=(H^{s_0}(\Omega))^*$. We can therefore extend the bilinear form~\eqref{eq:def-Btria} by duality to $H_D^{1+s_0}(\Omega)\times H_D^{1-s_0}(\Omega)$ and the stability of $\Pi_\tria$ at both orders yields
  \begin{align*}
    \abs{(B_\tria v)(z)}
    &\lesssim
    \norm{v}_{H_D^{1+s_0}(\Omega)}
    \norm{z}_{H_D^{1-s_0}(\Omega)} \quad \text{for all }\text{$v\in H_D^{1+s_0}(\Omega)$ and $z\in H_D^{1-s_0}(\Omega)$}.
  \end{align*}
 By symmetry, the same estimate holds with the two smoothness orders interchanged and so, with $X_-\coloneqq H_D^{1-s_0}(\Omega)$ and $X_+\coloneqq H_D^{1+s_0}(\Omega)$, we have obtained uniform boundedness of $B_\tria\colon X_-\to X_+^*$ and $B_\tria\colon X_+\to X_-^*$.  The complex interpolation identities of~\cite[Thm.~1.2, (c)]{BechtelEgert2019}, together with duality, give for $0<\theta<1$
  \begin{align*}
    [X_-,X_+]_\theta
    &=
    H_D^{1+(2\theta-1)s_0}(\Omega) \qquad\text{and}\qquad
    [X_+^*,X_-^*]_\theta
    =
    \bigl(H_D^{1-(2\theta-1)s_0}(\Omega)\bigr)^*.
  \end{align*}
  At $\theta=1/2$, these spaces are $H^1_0(\Omega)$ and $H^{-1}(\Omega)$, respectively, and $B_\tria$ is uniformly invertible by~\eqref{eq:B-energy-inverse}. Applying the quantitative Shneiberg theorem, we obtain some value $\tau\in(0,s_0)$, independent of $\tria$, such that
  \begin{align}\label{eq:B-Hs-inverse}
    B_\tria\colon H_D^{1+s}(\Omega)
    \to
    \bigl(H_D^{1-s}(\Omega)\bigr)^*
    \quad\text{is uniformly invertible for all }
    \abs{s}\leq\tau.
  \end{align}

  \textit{Step 4 (Transfer to the Ritz projection).}
  Since $\Pi_\tria R_\tria v=R_\tria v$ and $\Pi_\tria z\in V(\tria)$, the definition of the Ritz projection gives for all $v\in W^{1,1}(\Omega)$ and $z\in W^{1,1}_0(\Omega)$
  \begin{align}\label{eq:B-Ritz-identity}
    (B_\tria R_\tria v)(z)
    &=
    \int_\Omega\nabla R_\tria v\cdot\nabla\Pi_\tria z\dx
    =
    \int_\Omega\nabla v\cdot\nabla\Pi_\tria z\dx
    \eqqcolon
    \ell_v(z).
  \end{align}
  Let $1<p<\infty$ with $\abs{1/p-1/2}\leq\epsilon$. By the $W^{1,p'}(\Omega)$-stability of $\Pi_\tria$, we have
  \begin{align*}
    \abs{\ell_v(z)}
    &\leq
    \norm{\nabla v}_{L^p(\Omega)}
    \norm{\nabla\Pi_\tria z}_{L^{p'}(\Omega)}
    \lesssim
    \norm{\nabla v}_{L^p(\Omega)}
    \norm{\nabla z}_{L^{p'}(\Omega)}.
  \end{align*}
This shows $\norm{\ell_v}_{W^{-1,p}(\Omega)}
    \lesssim
    \norm{\nabla v}_{L^p(\Omega)}$ for all $v\in W^{1,p}(\Omega)$. Hence, combining~\eqref{eq:B-Ritz-identity} with the uniform invertibility~\eqref{eq:BT-W1p-invertible} proves \ref{itm:Ritz-W1p}.

  Next, let $0\leq s\leq\tau < 1/2$ and let $v\in H_D^{1+s}(\Omega)$. Since the gradients of finite element functions are piecewise polynomials, Lemma~\ref{lem:localized-Wsq} implies $    V(\tria)\subset H_D^{1+s}(\Omega)$.
  In particular, $R_\tria v\in H_D^{1+s}(\Omega)$. For $z\in H_D^{1-s}(\Omega)$, the stability of $\Pi_\tria$ and the duality between $H^s(\Omega)$ and $H^{-s}(\Omega)$ give
  \begin{align*}
    \abs{\ell_v(z)}
    &\lesssim
    \norm{\nabla v}_{H^s(\Omega)}
    \norm{\nabla\Pi_\tria z}_{H^{-s}(\Omega)}
    \lesssim
    \norm{v}_{H^{1+s}(\Omega)}
    \norm{z}_{H_D^{1-s}(\Omega)}.
  \end{align*}
  This shows 
  \begin{align*}
    \norm{\ell_v}_{(H_D^{1-s}(\Omega))^*}
    \lesssim
    \norm{v}_{H^{1+s}(\Omega)}.
  \end{align*}
  Since $\Pi_\tria z\in V(\tria)$, the identity~\eqref{eq:B-Ritz-identity} also holds for $z\in H_D^{1-s}(\Omega)$, with $B_\tria$ interpreted through the duality extension of Step~3. The uniform invertibility~\eqref{eq:B-Hs-inverse} therefore yields \ref{itm:Ritz-Hs}; that is,
  \begin{align*}
    \norm{R_\tria v}_{H^{1+s}(\Omega)}
    &\lesssim
    \norm{\ell_v}_{(H_D^{1-s}(\Omega))^*}
    \lesssim
    \norm{v}_{H^{1+s}(\Omega)}.
  \end{align*}

  \textit{Step 5 (Interpolation).}
  Choose $r\in(1,2)$ satisfying $\abs{\frac1r-\frac12}\leq\epsilon$, and let $\tau$ be as in \ref{itm:Ritz-Hs}. Set
  \begin{align*}
    a&\coloneqq \frac dr-\frac d2,
    &
    \lambda&\coloneqq\frac{a}{a+\tau},
    &
    \frac1q&\coloneqq\frac{1-\lambda}{r}+\frac{\lambda}{2},
    &
    \sigma&\coloneqq\lambda\tau.
  \end{align*}
  Then we have $r<q<2$ as well as 
  \begin{align*}
    \sigma
    =\lambda\tau
    =(1-\lambda)a
    =\frac dq-\frac d2\qquad\text{and}
    \qquad
    0<\sigma<\tau<\frac12<\frac1q.
  \end{align*}
  In particular, this shows $\sigma q<1$.
  By \ref{itm:Ritz-W1p} and \ref{itm:Ritz-Hs}, the Ritz projection is uniformly bounded on both $H_D^{1,r}(\Omega)$ and $H_D^{1+\tau,2}(\Omega)$. Both endpoints are Bessel potential spaces, and their smoothness orders are distinct. The classical real interpolation formula for Bessel potential spaces~\cite[Sec.~2.4.2]{Triebel1978}, together with its trace-preserving version~\cite[Thm.~1.2, (d)]{BechtelEgert2019}, gives
  \begin{align*}
    \bigl(H_D^{1,r}(\Omega),H_D^{1+\tau,2}(\Omega)\bigr)_{\lambda,q}
    &=
    W_D^{1+\sigma,q}(\Omega)
   =
    W^{1+\sigma,q}(\Omega)\cap W^{1,q}_0(\Omega).
  \end{align*}
  Consequently, we have for all $v\in W^{1+\sigma,q}(\Omega)\cap W^{1,q}_0(\Omega)$ the bound
  \begin{align}\label{eq:Ritz-W1sigmaq}
    \norm{R_\tria v}_{W^{1+\sigma,q}(\Omega)}
    \lesssim \norm{v}_{W^{1+\sigma,q}(\Omega)}.
  \end{align}
Moreover, such functions $v$ satisfy $\int_\Omega\nabla v\dx=0$. The fractional Poincar\'e inequality applied to $\nabla v$ and the usual Poincar\'e inequality therefore yield
  \begin{align*}
    \norm{v}_{W^{1+\sigma,q}(\Omega)}
    \lesssim
    \abs{\nabla v}_{W^{\sigma,q}(\Omega)}.
  \end{align*}
  Combining this with~\eqref{eq:Ritz-W1sigmaq}, we obtain
  \begin{align*}
    \abs{\nabla R_\tria v}_{W^{\sigma,q}(\Omega)}
    \lesssim
    \norm{R_\tria v}_{W^{1+\sigma,q}(\Omega)}
    \lesssim
    \norm{v}_{W^{1+\sigma,q}(\Omega)}
    \lesssim
    \abs{\nabla v}_{W^{\sigma,q}(\Omega)}.
  \end{align*}
  This proves \ref{itm:Ritz-Wsq} and concludes the proof.
\end{proof}


\subsection{Sparse projection errors}
\label{subsec:SparseProjectionError}

The difference between Galerkin solutions on two nested meshes may be globally supported. The key observation is that the error of any uniformly energy-stable projection of the finer solution onto the coarse space has a comparable energy norm; see \cite[Lemma~3.4]{DieningKreuzerStevenson2016}. Related decompositions into coarse and detail components are used in multigrid and hierarchical basis methods; cf.~\cite[Secs.~3.4 and~6.2]{Xu1992}. Choosing a Scott--Zhang projection adapted to the pair of meshes, as in \cite[Lemma~3.5]{DieningKreuzerStevenson2016}, makes this projection error vanish on the common elements. We thus obtain a sparse representative of the Galerkin difference with a comparable energy norm.
We formulate this construction for arbitrary Galerkin-orthogonal functions and combine the resulting Ritz reconstruction with the fractional stability established in the preceding subsection. 

Let $\tria,\tria_\star\in\meshes$ with $\tria\leq\tria_\star$ and consider the difference
\begin{align*}
  w\coloneqq u_{\tria_\star}-u_\tria\in V(\tria_\star).
\end{align*}
Galerkin orthogonality yields
\begin{align}\label{eq:Ortho}
  \int_\Omega\nabla w\cdot\nabla v\dx=0
  \qquad\text{for all }v\in V(\tria).
\end{align}
Our proof uses a Scott--Zhang-type projection~\cite{ScottZhang1990}
\begin{align*}
  \Pi_\tria^{\tria_\star}\colon V(\tria_\star)\to V(\tria)
\end{align*}
whose averaging simplices are adapted to the pair of meshes following~\cite[Lemma~3.5]{DieningKreuzerStevenson2016}. At Lagrange nodes on the mesh skeleton inside $\Omega$, we average over an incident coarse facet, choosing a facet of a common element whenever possible. At nodes in the interior of a coarse element, we average over that element; boundary coefficients are set to zero. The operator is a projection onto $V(\tria)$ and satisfies, with hidden constant depending only on the polynomial degree and shape regularity,
\begin{align}\label{eq:SZStability}
  \|\nabla\Pi_\tria^{\tria_\star}v\|_{L^2(\Omega)}
  &\lesssim
  \|\nabla v\|_{L^2(\Omega)}
  \qquad \text{for all }v\in V(\tria_\star).
\end{align}
It preserves finite element functions on common elements in the sense that
\begin{align}\label{eq:SZCommon}
  (\Pi_\tria^{\tria_\star}v)|_T
  &=
  v|_T
  \qquad \text{for all }v\in V(\tria_\star)
  \text{ and }T\in\tria\cap\tria_\star.
\end{align}

\begin{lemma}[Sparse projection error]
  \label{lem:SparseProjectionError}
  Let $\tria,\tria_\star\in\meshes$ with $\tria \leq \tria_\star$ and let $w\in V(\tria_\star)$ satisfy \eqref{eq:Ortho}. Then the sparse projection error
  \begin{align*}
    z\coloneqq w-\Pi_\tria^{\tria_\star}w\in V(\tria_\star)
  \end{align*}
  has the following properties:
  \begin{enumerate}[itemsep=0.3em]
  \item\label{itm:reconstruction}
    \textup{\textbf{Ritz reconstruction.}}
    $w=(\operatorname{Id}-R_\tria)z$;

  \item\label{itm:Equivalence}
    \textup{\textbf{Equivalence.}}
    $\|\nabla z\|_{L^2(\Omega)}\eqsim\|\nabla w\|_{L^2(\Omega)}$;

  \item\label{itm:Sparsity}
    \textup{\textbf{Sparsity.}} We have $ \set{\nabla z\neq0}
      \subseteq
        \bigcup(\tria\setminus\tria_\star)
        =
        \bigcup(\tria_\star\setminus\tria)$\\ \hphantom{\textup{\textbf{Sparsity. }}}and $\#\set{T\in\tria_\star\colon\nabla z|_T\neq0}
      \leq
        2(\#\tria_\star-\#\tria)$.
  \end{enumerate}
\end{lemma}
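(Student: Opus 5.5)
The three properties will be proved separately; each uses only the projection properties of $\Pi_\tria^{\tria_\star}$ stated above and Galerkin orthogonality \eqref{eq:Ortho}.

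\textbf{Ritz reconstruction.} Set $v\coloneqq\Pi_\tria^{\tria_\star}w\in V(\tria)$, so that $z=w-v$. Since $R_\tria$ is a projection onto $V(\tria)$, we have $R_\tria v=v$. Moreover, \eqref{eq:Ortho} together with \eqref{eq:DefRitz} gives $R_\tria w=0$, because $\int_\Omega\nabla R_\tria w\cdot\nabla\xi\dx=\int_\Omega\nabla w\cdot\nabla\xi\dx=0$ for all $\xi\in V(\tria)$. Hence
\begin{align*}
  (\operatorname{Id}-R_\tria)z=w-v-R_\tria w+R_\tria v=w-v-0+v=w.
\end{align*}

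\textbf{Equivalence.} The upper bound $\|\nabla z\|_{L^2(\Omega)}\lesssim\|\nabla w\|_{L^2(\Omega)}$ follows from the triangle inequality and the stability \eqref{eq:SZStability}. For the lower bound, I would use that $\operatorname{Id}-R_\tria$ is the energy-orthogonal projection onto the complement of $V(\tria)$, so its operator norm in the energy seminorm is at most one. Item~\ref{itm:reconstruction} then gives $\|\nabla w\|_{L^2(\Omega)}\le\|\nabla z\|_{L^2(\Omega)}$. Alternatively, orthogonality yields $\|\nabla z\|^2=\|\nabla w\|^2+\|\nabla v\|^2\ge\|\nabla w\|^2$ directly.

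\textbf{Sparsity.} By \eqref{eq:SZCommon} we have $z|_T=0$ for every $T\in\tria\cap\tria_\star$. Hence $\nabla z$ vanishes on $\bigcup(\tria\cap\tria_\star)$, and up to a null set its support lies in the complement $\bigcup(\tria\setminus\tria_\star)$. This union equals $\bigcup(\tria_\star\setminus\tria)$, since both partition $\Omega\setminus\bigcup(\tria\cap\tria_\star)$.

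For the count, only elements of $\tria_\star\setminus\tria$ can carry $\nabla z\neq0$. Each $T\in\tria\setminus\tria_\star$ is split into at least two children in $\tria_\star$, so
\begin{align*}
  \#(\tria_\star\setminus\tria)-\#(\tria\setminus\tria_\star)=\#\tria_\star-\#\tria
  \qquad\text{and}\qquad
  \#(\tria\setminus\tria_\star)\le\tfrac12\#(\tria_\star\setminus\tria).
\end{align*}
Combining these gives $\#(\tria_\star\setminus\tria)\le2(\#\tria_\star-\#\tria)$, which is the claimed bound.

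\textbf{Main obstacle.} The only substantive inputs are \eqref{eq:SZStability} and \eqref{eq:SZCommon}, which the paper attributes to the adapted construction of \cite[Lemma~3.5]{DieningKreuzerStevenson2016}. The step that needs care is therefore the cardinality bookkeeping, together with checking that the zero on common elements really holds pointwise. The latter requires that the averaging at skeleton nodes of common elements uses a facet of a common element, on which $w$ is already a coarse polynomial.
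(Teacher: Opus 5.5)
Your proposal is correct and follows essentially the same route as the paper. You use $R_\tria w=0$ and $R_\tria\Pi_\tria^{\tria_\star}w=\Pi_\tria^{\tria_\star}w$ for the reconstruction, and energy stability of $\Pi_\tria^{\tria_\star}$ and of the orthogonal projection $R_\tria$ for the equivalence; your Pythagoras identity just makes the lower bound explicit with constant one. For sparsity you use \eqref{eq:SZCommon} together with $\#(\tria_\star\setminus\tria)=\#\tria_\star-\#\tria+\#(\tria\setminus\tria_\star)$ and $\#(\tria\setminus\tria_\star)\le\#\tria_\star-\#\tria$, exactly as in the paper.
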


\begin{proof}
  The orthogonality \eqref{eq:Ortho} implies $R_\tria w=0$, while the projection property of the Ritz map gives $R_\tria\Pi_\tria^{\tria_\star}w = \Pi_\tria^{\tria_\star}w$.  Therefore, \ref{itm:reconstruction} follows by
  \begin{align*}
    (\operatorname{Id}-R_\tria)z
   &  =
    (\operatorname{Id}-R_\tria)
    \bigl(w-\Pi_\tria^{\tria_\star}w\bigr)
    =
    w.
  \end{align*}
  The $W^{1,2}(\Omega)$-stability of $\Pi_\tria^{\tria_\star}$ and $R_\tria$ gives \ref{itm:Equivalence}. By \eqref{eq:SZCommon}, the function $z$ vanishes on every common element $T\in\tria\cap\tria_\star$, which proves the first assertion in~\ref{itm:Sparsity}. The second assertion follows from
  \begin{align*}
    \#\bigl\{T\in\tria_\star\colon\nabla z|_T\neq0\bigr\}
    &\leq
    \#(\tria_\star\setminus\tria)
    =
    \#\tria_\star-\#\tria+\#(\tria\setminus\tria_\star)
    \leq
    2(\#\tria_\star-\#\tria).
  \end{align*}
  This concludes the proof.
\end{proof}
The sparsity of the projection error $z$, quantified in \ref{itm:Sparsity}, will produce the factor $(\#\tria_\star-\#\tria)^{1/q-1/2}$ in \eqref{eq:EstimatorChangeCriterion}. The Ritz reconstruction \ref{itm:reconstruction}, together with the fractional stability of the Ritz and Scott--Zhang projections, yields the following fractional norm equivalence.
\begin{lemma}[Fractional norm equivalence of the sparse projection error]
  \label{lem:fractionalSparseProjection}
  Let $\tria,\tria_\star\in\meshes$ with $\tria \leq \tria_\star$ and let $w\in V(\tria_\star)$ satisfy \eqref{eq:Ortho}. For the sparse projection error $z\coloneqq w-\Pi_\tria^{\tria_\star}w$ and the exponents $q\in(1,2)$ and $\sigma=d/q- d/2>0$ from Lemma~\ref{lem:PoissonCriticalRitz}, we have
  \begin{align*}
    \abs{\nabla w}_{W^{\sigma,q}(\Omega)}
    \eqsim
    \abs{\nabla z}_{W^{\sigma,q}(\Omega)}.
  \end{align*}
\end{lemma}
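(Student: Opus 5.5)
Both directions follow from linear relations between $w$ and $z$ combined with fractional stability. For ``$\lesssim$'' we use the Ritz reconstruction of Lemma~\ref{lem:SparseProjectionError}\ref{itm:reconstruction}, $w=z-R_\tria z$, and estimate
\begin{align*}
  \abs{\nabla w}_{W^{\sigma,q}(\Omega)}
  \leq
  \abs{\nabla z}_{W^{\sigma,q}(\Omega)}
  +\abs{\nabla R_\tria z}_{W^{\sigma,q}(\Omega)}
  \lesssim
  \abs{\nabla z}_{W^{\sigma,q}(\Omega)},
\end{align*}
where the last step is Lemma~\ref{lem:PoissonCriticalRitz}\ref{itm:Ritz-Wsq}.

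Applying Lemma~\ref{lem:PoissonCriticalRitz}\ref{itm:Ritz-Wsq} to $v=z$ requires $z\in W^{1+\sigma,q}(\Omega)\cap W^{1,q}_0(\Omega)$. Since $z\in V(\tria_\star)$ is a continuous piecewise polynomial vanishing on $\partial\Omega$, it lies in $W^{1,q}_0(\Omega)$. Its gradient is piecewise polynomial, and because $\sigma q<1$, Lemma~\ref{lem:localized-Wsq} from the appendix (already used in Step~4 of the proof of Lemma~\ref{lem:PoissonCriticalRitz}) gives $\nabla z\in W^{\sigma,q}(\Omega)$. This is precisely where the condition $\sigma q<1$ enters: jumps across facets have finite $W^{\sigma,q}$ seminorm only in this range.

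For ``$\gtrsim$'' we write $z=(\operatorname{Id}-\Pi_\tria^{\tria_\star})w$ and need
\begin{align*}
  \abs{\nabla\Pi_\tria^{\tria_\star}w}_{W^{\sigma,q}(\Omega)}
  \lesssim
  \abs{\nabla w}_{W^{\sigma,q}(\Omega)}
  \qquad\text{for all }w\in V(\tria_\star).
\end{align*}
This is fractional Scott--Zhang stability for the adapted operator. The plan is to deduce it from the argument of Lemma~\ref{lem:SZFractionalStability} in Appendix~\ref{app:SZFractionalStability} and to verify that the proof depends only on the following structural properties:
\begin{itemize}
  \item locality, i.e.\ averaging over simplices or facets contained in the patch of each node;
  \item preservation of polynomials on patches, more precisely of $\mathbb P_k$ on the coarse patch, which suffices for the local approximation estimates;
  \item $L^1$-type bounds for the dual basis functions, uniform under shape regularity.
\end{itemize}
The adapted averaging sets are incident coarse facets or coarse elements, so the same local estimates apply. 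Since $\sigma<1$, the argument needs only local approximation of $\nabla w$ by constants, which is compatible with this fine-to-coarse setting. Because the paper defines $\Pi_\tria^{\tria_\star}$ only on $V(\tria_\star)$, working with piecewise polynomial inputs also avoids trace issues for facet averaging.

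The proof would check this structure and then use $\abs{\nabla z}_{W^{\sigma,q}}\le \abs{\nabla w}_{W^{\sigma,q}}+\abs{\nabla\Pi_\tria^{\tria_\star}w}_{W^{\sigma,q}}$. The main obstacle is this transfer: confirming that the fractional estimate of the appendix, presumably proved for the standard trace-preserving Scott--Zhang operator on a single mesh, holds for the pair-adapted operator. The difficulty is the nonlocal double integral over elements of different sizes, which must be handled by a localized decomposition into near-diagonal patch contributions plus a far-field term controlled by a Hardy-type inequality.

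If a direct transfer fails, the fallback is to use that $z$ and $w$ coincide away from $\bigcup(\tria\setminus\tria_\star)$. Then $\nabla\Pi_\tria^{\tria_\star}w$ equals $\nabla w$ on common elements and is a coarse piecewise polynomial elsewhere. In that case the differences would be bounded patchwise by local Poincaré estimates in $W^{\sigma,q}$ on coarse patches, together with finite overlap.
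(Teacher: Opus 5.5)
Your first direction is the paper's argument, and your check that $z\in W^{1+\sigma,q}(\Omega)\cap W^{1,q}_0(\Omega)$ via Lemma~\ref{lem:localized-Wsq} is correct. The converse, however, is left as a plan to re-run the appendix proof for $\Pi_\tria^{\tria_\star}$, and that plan does not go through verbatim as you formulate it. The argument in \eqref{eq:SZLocalFractionalError} applies the operator to $v-p_T$ with an affine $p_T$ and uses $\Pi_\tria p_T=p_T$. The adapted operator is only defined on $V(\tria_\star)$. If you extend it by its own rule that boundary coefficients are set to zero, its range has zero trace, so it cannot reproduce $p_T$ on patches touching $\partial\Omega$. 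Thus the structural property your verification rests on, preservation of $\mathbb P_k$ on the coarse patch, fails exactly there. So as written, the direction $\abs{\nabla z}_{W^{\sigma,q}(\Omega)}\lesssim\abs{\nabla w}_{W^{\sigma,q}(\Omega)}$ is not yet established.

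The missing observation makes the step a direct citation, which is how the paper argues: no transfer is needed.
\begin{itemize}
\item The averaging sets of $\Pi_\tria^{\tria_\star}$ are admissible Scott--Zhang choices on the single mesh $\tria$: an incident coarse facet for skeleton nodes and the coarse element for element-interior nodes. The fine mesh $\tria_\star$ only decides which coarse facet is selected, so the fine-to-coarse setting is not an obstacle.
\item At boundary nodes, any boundary-facet average of $w\in V(\tria_\star)$ vanishes because $w|_{\partial\Omega}=0$.
\end{itemize}
Hence $\Pi_\tria^{\tria_\star}w=\Pi_\tria w$ for a full trace-preserving Scott--Zhang operator $\Pi_\tria$ with the same interior functionals and boundary-facet averages, and this operator does reproduce affine functions. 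The constants in Lemma~\ref{lem:SZFractionalStability} are uniform over admissible averaging choices. Therefore \eqref{eq:SZFractionalGradientStability} with $s=\sigma<1/q$ gives $\abs{\nabla z}_{W^{\sigma,q}(\Omega)}\lesssim\abs{\nabla w}_{W^{\sigma,q}(\Omega)}$ at once; it applies because $w\in W^{1+\sigma,q}(\Omega)$ by Lemma~\ref{lem:localized-Wsq}.

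The nonlocal far-field interaction you anticipate is already absorbed by Lemma~\ref{lem:localized-Wsq} inside that proof, so you need neither a Hardy-type argument nor your fallback. In the fallback you also misstate the sparsity: on common elements it is $\Pi_\tria^{\tria_\star}w$ that coincides with $w$, while $z$ vanishes there.
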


\begin{proof}
  Since $w=(\operatorname{Id}-R_\tria)z$, Lemma~\ref{lem:PoissonCriticalRitz} gives
  \begin{align*}
    \abs{\nabla w}_{W^{\sigma,q}(\Omega)}
    \lesssim
    \abs{\nabla z}_{W^{\sigma,q}(\Omega)}.
  \end{align*}
  Conversely, $z=w-\Pi_\tria^{\tria_\star}w$. On $V(\tria_\star)$, the adapted projection $\Pi_\tria^{\tria_\star}$ agrees with a full trace-preserving Scott--Zhang operator having the same interior averaging functionals and boundary-facet averages at boundary nodes. Hence Lemma~\ref{lem:SZFractionalStability}, whose constants are uniform over these choices, gives
  \begin{align*}
    \abs{\nabla z}_{W^{\sigma,q}(\Omega)}
    & \lesssim
    \abs{\nabla w}_{W^{\sigma,q}(\Omega)}.\qedhere
  \end{align*}
\end{proof}

The fractional norm equivalence above allows us to estimate the Galerkin difference through its sparse projection error. The following Bernstein estimate quantifies the gain from this sparsity.  For $\tria\in\meshes$, let $\mathbb P_k(\tria)$ denote the space of functions that are polynomials of degree at most $k$ on each $T\in\tria$.

\begin{lemma}[Sparse Bernstein estimate]
  \label{lem:inverseEst}
  Let $1<q<2$ and $\sigma=d/q- d/2>0$ with $\sigma q<1$ and let $\tria \in \meshes$.
  \begin{enumerate}[itemsep=0.3em]
  \item\label{itm:1Bernstein} For all $g\in\mathbb P_{k-1}(\tria)$, we have the Bernstein estimate
    \begin{align*}
      \abs{g}_{W^{\sigma,q}(\Omega)}
      \lesssim
      \Big(
        \sum_{T\in\tria}
        \norm{g}_{L^2(T)}^q
      \Big)^{1/q}.
    \end{align*}
  \item\label{itm:2Bernstein} If, in addition, $g$ vanishes on all but at most $M\in\mathbb N$ elements, then we have
    \begin{align*}
      \abs{g}_{W^{\sigma,q}(\Omega)}
      \lesssim
      M^{\frac 1q- \frac 12}
      \norm{g}_{L^2(\Omega)}.
    \end{align*}
  \end{enumerate}
  The hidden constants depend on the polynomial degree, $q$, the shape regularity, and the distance of $\sigma q$ from one.
\end{lemma}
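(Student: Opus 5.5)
We first note that \ref{itm:2Bernstein} follows from \ref{itm:1Bernstein} by H\"older's inequality in $\ell^q$. If $g$ vanishes outside a set $\mathcal M\subseteq\tria$ with $\#\mathcal M\leq M$, then
\begin{align*}
  \Big(\sum_{T\in\mathcal M}\norm{g}_{L^2(T)}^q\Big)^{1/q}
  \leq
  M^{\frac1q-\frac12}
  \Big(\sum_{T\in\mathcal M}\norm{g}_{L^2(T)}^2\Big)^{1/2}
  =
  M^{\frac1q-\frac12}\norm{g}_{L^2(\Omega)}.
\end{align*}
So the work lies in \ref{itm:1Bernstein}.

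For \ref{itm:1Bernstein} we split the double integral in \eqref{eq:def-Wsq} into a near-field and a far-field part. For each $T\in\tria$, let $\omega_T$ denote the patch of elements touching $T$. We write
\begin{align*}
  \abs{g}_{W^{\sigma,q}(\Omega)}^q
  \leq
  \sum_{T\in\tria}\int_T\int_{\omega_T}\frac{|g(x)-g(y)|^q}{|x-y|^{d+\sigma q}}\dy\dx
  +
  \sum_{T\in\tria}\int_T\int_{\Omega\setminus\omega_T}\frac{|g(x)-g(y)|^q}{|x-y|^{d+\sigma q}}\dy\dx.
\end{align*}

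\emph{Near field.} The function $g$ is piecewise polynomial on $\omega_T$, and the patch has uniformly bounded cardinality by shape regularity. Since $\sigma q<1$, such a function lies in $W^{\sigma,q}(\omega_T)$; this is the role of Lemma~\ref{lem:localized-Wsq}, or of the one-sided estimate $\int_{T'}\dist(x,\partial T')^{-\sigma q}\dx\lesssim|T'|^{1-\sigma q/d}$. Mapping $\omega_T$ to a reference patch and using equivalence of norms on the finite-dimensional space of piecewise polynomials, the scaling of Remark~\ref{rem:scaling} gives
\[
  \abs{g}_{W^{\sigma,q}(\omega_T)}
  \lesssim
  h_T^{d/q-\sigma-d/2}\norm{g}_{L^2(\omega_T)}
  =
  \norm{g}_{L^2(\omega_T)}.
\]
The exponent vanishes precisely because of the critical choice $\sigma=d/q-d/2$. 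Summing the $q$-th powers over $T$ and using finite overlap yields the bound $\sum_T\norm{g}_{L^2(T)}^q$. The constants depend on the distance of $\sigma q$ from one, since the reference-patch seminorm blows up as $\sigma q\to1$.

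\emph{Far field.} We bound $|g(x)-g(y)|^q\lesssim|g(x)|^q+|g(y)|^q$, and by symmetry it suffices to treat the term with $|g(x)|^q$. For $x\in T$ and $y\notin\omega_T$ we have $|x-y|\gtrsim h_T$. Hence
\[
  \int_{\Omega\setminus\omega_T}|x-y|^{-d-\sigma q}\dy\lesssim h_T^{-\sigma q},
\]
and we obtain
\[
  \int_T|g|^q h_T^{-\sigma q}\dx
  \lesssim
  h_T^{-\sigma q}|T|^{1-q/2}\norm{g}_{L^2(T)}^q
  =
  \norm{g}_{L^2(T)}^q.
\]
Here we used the inverse estimate $\norm{g}_{L^q(T)}\lesssim|T|^{1/q-1/2}\norm{g}_{L^2(T)}$ for polynomials, which in fact holds for $q<2$ by H\"older's inequality alone, and the critical scaling $d(1-q/2)=\sigma q$.

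The symmetric term, with $|g(y)|^q$, is handled by swapping the roles of $x$ and $y$. One needs that $y\notin\omega_T$ is equivalent to $x\notin\omega_{T'}$ for $y\in T'$. Alternatively, one can replace the patch by the ball condition $|x-y|\geq c\max(h_T,h_{T'})$ and treat the mixed region, where only $|x-y|\geq c\min(h_T,h_{T'})$ holds, as near field; shape regularity makes neighbouring sizes comparable. I expect this bookkeeping, together with the uniform near-field estimate on a locally graded but non-quasi-uniform mesh (which is where $\sigma q<1$ enters), to be the main obstacle. Everything else is routine scaling.
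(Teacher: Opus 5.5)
Your proof is correct. Part (b) is exactly the paper's H\"older argument. For part (a) you take a more hands-on route, and the comparison is the following.

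The paper never splits into near and far field. It applies Lemma~\ref{lem:localized-Wsq} on all of $\Omega$. That proof uses the zero extension of each $g|_T$ and Grisvard's Hardy-type estimate, valid because $\sigma q<1$, so it already accounts for the interaction of every element with the rest of the domain through the term $h_T^{-\sigma q}\|g\|_{L^q(T)}^q$. What remains is the elementwise inverse estimate $|g|_{W^{\sigma,q}(T)}\lesssim h_T^{-\sigma q}\|g\|_{L^q(T)}$ on a single reference simplex, plus the same H\"older step and critical scaling $h_T^{-\sigma q}|T|^{1-q/2}=1$ that you use.

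Your direct integration of the kernel makes the far-field contribution and the role of the scaling explicit, but it costs two things the paper avoids.

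First, the uniform norm equivalence on reference patches needs either finitely many patch configurations up to similarity, or a compactness argument over shape-regular patches. The paper only ever maps one simplex.

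Second, the bound $|x-y|\gtrsim h_T$ for $x\in T$ and $y\in\Omega\setminus\omega_T$ depends on the domain and the initial mesh, not on shape regularity alone. Suppose two parts of $\partial\Omega$ are separated by a gap much narrower than the adjacent initial elements. Then the bound holds only with a constant depending on $\Omega$ and $\tria_0$. Your constants therefore acquire a dependence the lemma does not list; this is harmless for the application, but weaker than stated. A geometry-independent far-field bound requires replacing $h_T^{-\sigma q}$ by $\operatorname{dist}(x,\partial T)^{-\sigma q}$ together with a Hardy inequality on $T$. That is precisely what Grisvard's estimate supplies.

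Finally, your near-field step already invokes Lemma~\ref{lem:localized-Wsq}. Applying it globally would make both the far-field computation and the symmetry bookkeeping you worried about superfluous. That bookkeeping is in any case immediate, because the relation ``$T$ and $T'$ touch'' is symmetric.
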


\begin{proof}
Let $g\in\mathbb P_{k-1}(\tria)$.
  Since $0<\sigma< 1/q$, Lemma~\ref{lem:localized-Wsq} gives
  \begin{align*}
    \abs{g}_{W^{\sigma,q}(\Omega)}^q
    \lesssim
    \sum_{T\in\tria}
    \big(
      \abs{g}_{W^{\sigma,q}(T)}^q
      +
      h_T^{-\sigma q}\norm{g}_{L^q(T)}^q
    \big).
  \end{align*}
  The inverse estimate $\abs{g}_{W^{\sigma,q}(T)}^q
    \lesssim h_T^{-\sigma q}\norm{g}_{L^q(T)}^q$ and H\"older's inequality yield due to the scaling $\sigma=
  d/q- d/2$, cf.~Remark~\ref{rem:scaling},
  \begin{align*}
    \abs{g}_{W^{\sigma,q}(\Omega)}^q
    &\lesssim
    \sum_{T\in\tria}
    h_T^{-\sigma q}
    \norm{g}_{L^q(T)}^q
  \lesssim
    \sum_{T\in\tria}
    h_T^{-\sigma q}
    \big(
      |T|^{\frac 1q-\frac 12}\norm{g}_{L^2(T)}
    \big)^q
    \eqsim
    \sum_{T\in\tria}
    \norm{g}_{L^2(T)}^q.
  \end{align*}
  This proves \ref{itm:1Bernstein}. If at most $M$ terms are nonzero, H\"older's inequality for the finite sum implies \ref{itm:2Bernstein} in the sense that
  \begin{align*}
    \Big(
      \sum_{T\in\tria}\norm{g}_{L^2(T)}^q
    \Big)^{ 1/q}
    &\leq
    M^{\frac 1q- \frac 12}
    \norm{g}_{L^2(\Omega)}.\qedhere
  \end{align*}
\end{proof}


\subsection{Control of residual estimator changes}
\label{subsec:ResidualEstimatorChanges}

We now prove the estimator-change estimate in \eqref{eq:EstimatorChangeCriterion}. For $\tria\in\meshes$, $v\in V(\tria)$, and $T\in\tria$, define the homogeneous residual indicator
\begin{align*}
  \eta_\tria^{\rm hom}(v,T)^2
  \coloneqq
  h_T^2\|\Delta_hv\|_{L^2(T)}^2
  +
  h_T
  \sum_{\substack{F\subset\partial T\\F\in\mathcal F_\tria^{\mathrm{int}}}}
  \|\llbracket\nabla v\cdot n_F\rrbracket\|_{L^2(F)}^2.
\end{align*}
In other words, $\eta_\tria^{\rm hom}(v,T)$ is obtained from the residual indicator \eqref{eq:PoissonEstimator} by replacing $u_\tria$ by $v$ and setting $f=0$. The homogeneous indicator naturally occurs when comparing two residual estimators, since the data term $f$ cancels.

\begin{lemma}[Change of local error indicators]
  \label{lem:PoissonResidualChange}
  Let $\tria,\tria_\star\in\meshes$ with $\tria\leq\tria_\star$ and set $w\coloneqq u_{\tria_\star}-u_\tria$. Then we have for all $T\in\tria\cap\tria_\star$
  \begin{align*}
    |\eta_\tria(T)-\eta_{\tria_\star}(T)|
    \leq
    \eta_{\tria_\star}^{\rm hom}(w,T).
  \end{align*}
\end{lemma}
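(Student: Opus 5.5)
The estimate follows from the reverse triangle inequality in a weighted $\ell^2$ sense, applied to the vector of local residual contributions on a common element. Fix $T\in\tria\cap\tria_\star$. Since $T$ belongs to both meshes, $h_T$ is the same in both indicators, and $\eta_\tria(T)$, $\eta_{\tria_\star}(T)$ are the Euclidean norms of the vectors
\begin{align*}
  a_\tria\coloneqq\bigl(h_T(f+\Delta_hu_\tria)|_T,\;(h_T^{1/2}\llbracket\nabla u_\tria\cdot n_F\rrbracket)_F\bigr)
\end{align*}
and the analogous $a_{\tria_\star}$. Both vectors live in the Hilbert space $L^2(T)\times\prod_F L^2(F)$. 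The first step is therefore to check that both indicators use the same set of facets. This needs care: facets of $T$ in $\mathcal F^{\rm int}_\tria$ could, a priori, be split in $\tria_\star$ if a neighbour is refined. A facet $F\subset\partial T$ of the unrefined element $T\in\tria_\star$ remains a facet of $\tria_\star$ as a face of $T$, but the neighbouring side may consist of several finer facets. Conformity of newest-vertex bisection excludes hanging nodes, so each facet of $T$ is a facet of $\tria_\star$ with a single neighbour across it (possibly a different, refined element). Interior facets of $T$ are hence the same in both meshes, and the sum over $F$ runs over identical index sets.

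The second step is to use the reverse triangle inequality $|\,\|a_\tria\|-\|a_{\tria_\star}\|\,|\le\|a_\tria-a_{\tria_\star}\|$ and identify the difference. In the volume term, $f$ cancels and $\Delta_h$ is linear on $T$, leaving $h_T\Delta_h(u_{\tria_\star}-u_\tria)|_T=h_T\Delta_hw|_T$. For the jump terms we need $\llbracket\nabla u_{\tria_\star}\cdot n_F\rrbracket-\llbracket\nabla u_\tria\cdot n_F\rrbracket=\llbracket\nabla w\cdot n_F\rrbracket$, with the jump taken with respect to $\tria_\star$. On the $T$ side the traces of $\nabla u_\tria$ and $\nabla u_{\tria_\star}$ are the traces from $T$. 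On the other side, $u_\tria\in V(\tria)\subset V(\tria_\star)$ is piecewise polynomial on $\tria_\star$, and its trace from the neighbouring coarse element coincides with its trace from the fine neighbour in $\tria_\star$. Consequently, the $\tria$-jump of $u_\tria$ equals its $\tria_\star$-jump, and linearity gives the identity.

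The third step is to collect terms: $\|a_\tria-a_{\tria_\star}\|^2=h_T^2\|\Delta_hw\|_{L^2(T)}^2+h_T\sum_F\|\llbracket\nabla w\cdot n_F\rrbracket\|_{L^2(F)}^2=\eta^{\rm hom}_{\tria_\star}(w,T)^2$, which is the claim. The only delicate point is the facet bookkeeping in the first two steps. In particular, the jump of a coarse function across a facet of $T$ must be computable in $\tria_\star$, which holds because $u_\tria$ is also a $\tria_\star$-piecewise polynomial. Everything else is linear algebra.
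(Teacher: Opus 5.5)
Your proof is correct and follows the paper's argument. The paper likewise applies the reverse triangle inequality to the vector of volume and jump residuals in $L^2(T)\times\prod_F L^2(F)$, using that $h_T$ and the facets of an unrefined $T\in\tria\cap\tria_\star$ agree in both meshes. Your extra bookkeeping makes explicit what the paper states in one line: conformity makes each facet of $T$ a full facet of $\tria_\star$, and the $\tria$-jump of $u_\tria$ equals its $\tria_\star$-jump because $V(\tria)\subset V(\tria_\star)$.
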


\begin{proof}
  Let $T\in\tria\cap\tria_\star$. Since $T$ is unrefined, its local mesh size and facets agree in both triangulations. The reverse triangle inequality applied to the vector consisting of the volume and jump residuals therefore gives
  \begin{align*}
    |\eta_{\tria_\star}(T)-\eta_\tria(T)|^2
    &\leq
    h_T^2\|\Delta_h(u_{\tria_\star}-u_\tria)\|_{L^2(T)}^2
    \\
    &\quad+
    h_T\!\!
    \sum_{ F\subset\partial T\cap \mathcal F_{\tria}^{\mathrm{int}}}
    \|\llbracket\nabla(u_{\tria_\star}-u_\tria)\cdot n_F\rrbracket\|_{L^2(F)}^2
  =
    \eta_{\tria_\star}^{\rm hom}(w,T)^2.\qedhere
  \end{align*}
\end{proof}

The next lemma bounds the homogeneous residual indicator by the fractional Sobolev seminorm of the gradient.

\begin{lemma}[Fractional control of the homogeneous residual]
  \label{lem:PoissonResidualExtraction}
  Let $q\in(1,2)$ and $\sigma\coloneqq d/q-d/2>0$ with $\sigma q<1$. Then we have
  \begin{align*}
    \sum_{T\in\tria}
    \eta_\tria^{\rm hom}(v,T)^q
    \lesssim
    |\nabla v|_{W^{\sigma,q}(\Omega)}^q\qquad\text{for all } \tria\in\meshes\text{ and }v\in V(\tria).
  \end{align*}
\end{lemma}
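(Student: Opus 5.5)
We bound the volume and jump parts of $\eta_\tria^{\rm hom}(v,T)$ separately. In each case we show that the local residual term, raised to the power $q$, is dominated by a localized piece of the Gagliardo double integral of $g\coloneqq\nabla v\in\mathbb P_{k-1}(\tria)^d$. The key point is that the critical scaling $\sigma=d/q-d/2$ (Remark~\ref{rem:scaling}) makes every local estimate dilation invariant. Hence it suffices to prove each local bound on a reference configuration and transfer it by affine maps and shape regularity.

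\emph{Volume term.} On $T$ we have $\Delta_h v|_T=\operatorname{div}(g|_T)$, a polynomial. Consider the finite-dimensional space of vector polynomials $\mathbb P_{k-1}(\hat T)^d$. The seminorm $|\hat g|_{W^{\sigma,q}(\hat T)}$ vanishes exactly on constants, and $\operatorname{div}\hat g$ also vanishes on constants. By equivalence of seminorms on finite-dimensional spaces we get $\|\operatorname{div}\hat g\|_{L^2(\hat T)}\lesssim|\hat g|_{W^{\sigma,q}(\hat T)}$. Scaling gives
\begin{align*}
  h_T\|\Delta_h v\|_{L^2(T)}\lesssim|\nabla v|_{W^{\sigma,q}(T)}.
\end{align*}
Here $h_T$ compensates one derivative, and the $L^2$ and $W^{\sigma,q}$ seminorms scale identically.

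\emph{Jump term.} For an interior facet $F=T^+\cap T^-$, let $\omega_F=T^+\cup T^-$. Consider the finite-dimensional space of piecewise polynomials $g\in\mathbb P_{k-1}(\{T^+,T^-\})^d$, taken only up to reference patches, of which there are finitely many by shape regularity. On this space, $|g|_{W^{\sigma,q}(\omega_F)}$ vanishes only on globally constant $g$. This holds because $\sigma q>0$ forces the cross term $\int_{T^+}\int_{T^-}|g(x)-g(y)|^q|x-y|^{-d-\sigma q}$ to vanish only if $g$ agrees across the patch, and the seminorm is finite since $\sigma q<1$. The jump $[\![ g\cdot n_F]\!]$ vanishes on constants as well. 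Equivalence of seminorms and scaling then give
\begin{align*}
  h_T^{1/2}\|[\![\nabla v\cdot n_F]\!]\|_{L^2(F)}\lesssim|\nabla v|_{W^{\sigma,q}(\omega_F)}.
\end{align*}
Compactness of the family of reference patch shapes, which follows from shape regularity and the finitely many similarity classes under newest vertex bisection, makes the constant uniform.

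\emph{Summation.} We have
\begin{align*}
  \eta_\tria^{\rm hom}(v,T)^q\lesssim\sum_{F\subset\partial T}|\nabla v|^q_{W^{\sigma,q}(\omega_F)}+|\nabla v|^q_{W^{\sigma,q}(T)}.
\end{align*}
Sum over $T$. Each pair $(x,y)\in\omega_F\times\omega_F$ is counted only boundedly often, so the domains $\omega_F\times\omega_F$ overlap finitely in $\Omega\times\Omega$. The sum is therefore bounded by $|\nabla v|^q_{W^{\sigma,q}(\Omega)}$.

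The main obstacle is the jump estimate. We must check that the two-element patch seminorm really controls the jump, i.e. that it is a seminorm whose kernel is exactly the constants. We also need uniformity of the constants over patch geometries, which I would settle by the compactness argument above.
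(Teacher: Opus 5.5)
Your argument is correct, but you obtain the local estimates by a different route than the paper. You compress everything into one abstract step. On a reference element or two-element reference patch, the residual term is a seminorm on a finite-dimensional space of (piecewise) polynomials. Its kernel contains the constants, which form the kernel of the Gagliardo seminorm, so equivalence of seminorms plus the dilation invariance from $\sigma=d/q-d/2$ gives the bound.

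The paper instead subtracts the mean $\langle\nabla v\rangle_T$ (resp.\ $\langle\nabla v\rangle_{\omega_F}$), which changes neither $\Delta_h v$ nor the normal jump, and then chains explicit tools:
\begin{enumerate}
\item an inverse estimate (resp.\ a discrete trace inequality on $T^\pm$) down to $\|\nabla v-\langle\nabla v\rangle\|_{L^2}$;
\item the $L^2$--$L^q$ inverse estimate, which produces the factor $h_T^{-\sigma}$;
\item the elementary fractional Poincar\'e inequality $\|g-\langle g\rangle_\omega\|_{L^q(\omega)}\lesssim\operatorname{diam}(\omega)^{\sigma}|g|_{W^{\sigma,q}(\omega)}$, valid for every $g\in L^q(\omega)$ by Jensen's inequality.
\end{enumerate}
The summation step is the same as yours.

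The paper's route gets uniformity for free, since every constant depends only on shape regularity through a single reference simplex. In your route the uniformity step is the only delicate point. Shape regularity gives compactness of the normalized configurations, not finiteness, so ``finitely many by shape regularity'' is a slip. You must either invoke the finitely many similarity classes of newest-vertex bisection, which is legitimate here but ties the proof to the refinement rule, or show that the equivalence constant stays bounded over a compact family of geometries.

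Also, under a general affine map $\operatorname{div}$ does not pull back to the reference divergence. In the volume term, bound the full reference gradient instead (it has the same kernel), or work with similarity classes. In exchange, your version makes transparent that only the kernel inclusion and the critical scaling matter.
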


\begin{proof}
Let $T\in\tria$ and $v\in V(\tria)$. We denote the integral mean by $
    \mean{\nabla v}_T
    \coloneqq
    \dashint_T\nabla v\dx$.
  Since $\nabla v-\mean{\nabla v}_T$ is a polynomial on~$T$, inverse estimates, the fractional Poincar\'e inequality, and the scaling $\sigma= d/q-d/2$ give
  \begin{align*}
    h_T\|\Delta_hv\|_{L^2(T)}
    &\lesssim
    h_T\|\nabla^2v\|_{L^2(T)}
    \lesssim
    \|\nabla v-\mean{\nabla v}_T\|_{L^2(T)}
    \\
    &\lesssim
    h_T^{-\sigma}
    \|\nabla v-\mean{\nabla v}_T\|_{L^q(T)}
    \lesssim
    |\nabla v|_{W^{\sigma,q}(T)}.
  \end{align*}
For $F=T\cap T'\in\mathcal F_\tria^{\mathrm{int}}$, set $\omega_F\coloneqq T\cup T'$ and define $    \mean{\nabla v}_{\omega_F}
    \coloneqq
    \dashint_{\omega_F}\nabla v\dx$.
  By shape regularity, the diameters of $T$ and $T'$ are comparable. Trace and inverse estimates followed by inverse and fractional Poincar\'e inequalities yield
  \begin{align*}
    h_T^{1/2}
    \|\llbracket\nabla v\cdot n_F\rrbracket\|_{L^2(F)}
    &\lesssim
    \|\nabla v-\mean{\nabla v}_{\omega_F}\|_{L^2(\omega_F)}
    \\
    &\lesssim
    h_T^{-\sigma}
    \|\nabla v-\mean{\nabla v}_{\omega_F}\|_{L^q(\omega_F)}
    \lesssim
    |\nabla v|_{W^{\sigma,q}(\omega_F)}.
  \end{align*}
  Since $q<2$, we have $(a^2 + b^2)^{q/2} \leq a^q + b^q$ for nonnegative numbers $a,b\geq 0$.  Hence, using that every simplex has only finitely many facets and that the patches $\omega_F$ have uniformly bounded overlap, we obtain
  \begin{align*}
    \sum_{T\in\tria}
    \eta_\tria^{\rm hom}(v,T)^q
    &\lesssim
    \sum_{T\in\tria}
    |\nabla v|_{W^{\sigma,q}(T)}^q
    +
    \sum_{F\in\mathcal F_\tria^{\mathrm{int}}}
    |\nabla v|_{W^{\sigma,q}(\omega_F)}^q
\lesssim
    |\nabla v|_{W^{\sigma,q}(\Omega)}^q.\qedhere
  \end{align*}
\end{proof}

\begin{lemma}[Estimator-change estimate]
  \label{lem:PoissonEstimatorChange}
  Let $q\in(1,2)$ be the exponent of Lemma~\ref{lem:PoissonCriticalRitz} and set $\alpha\coloneqq1/q-1/2>0$. We have, for all $\tria,\tria_\star\in\meshes$ with $\tria\leq\tria_\star$,
  \begin{align}\label{eq:PoissonEstimatorChange}
    \Big(
      \sum_{T\in\tria\cap\tria_\star}
      |\eta_\tria(T)-\eta_{\tria_\star}(T)|^q
    \Big)^{1/q}
    \lesssim
    (\#\tria_\star-\#\tria)^\alpha
    \delta(\tria,\tria_\star).
  \end{align}
\end{lemma}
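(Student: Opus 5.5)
We chain the preceding lemmas, with $w\coloneqq u_{\tria_\star}-u_\tria\in V(\tria_\star)$, which satisfies the Galerkin orthogonality \eqref{eq:Ortho}. First, Lemma~\ref{lem:PoissonResidualChange} bounds each change on a common element by the homogeneous indicator, $|\eta_\tria(T)-\eta_{\tria_\star}(T)|\leq\eta_{\tria_\star}^{\rm hom}(w,T)$ for $T\in\tria\cap\tria_\star$. We then enlarge the sum over $\tria\cap\tria_\star$ to a sum over all of $\tria_\star$. Applying Lemma~\ref{lem:PoissonResidualExtraction} on the mesh $\tria_\star$ to $v=w\in V(\tria_\star)$, with the exponents $q,\sigma$ from Lemma~\ref{lem:PoissonCriticalRitz} (which satisfy $\sigma q<1$), yields
\begin{align*}
  \Big(\sum_{T\in\tria\cap\tria_\star}|\eta_\tria(T)-\eta_{\tria_\star}(T)|^q\Big)^{1/q}
  \leq\Big(\sum_{T\in\tria_\star}\eta^{\rm hom}_{\tria_\star}(w,T)^q\Big)^{1/q}
  \lesssim|\nabla w|_{W^{\sigma,q}(\Omega)}.
\end{align*}

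The main difficulty is that $\nabla w$ may be globally supported, so a Bernstein estimate applied directly to $\nabla w$ would yield a factor $(\#\tria_\star)^\alpha$ instead of $(\#\tria_\star-\#\tria)^\alpha$. To avoid this, we pass to the sparse projection error $z\coloneqq w-\Pi_\tria^{\tria_\star}w$. Lemma~\ref{lem:fractionalSparseProjection}, which rests on the fractional Ritz stability of Lemma~\ref{lem:PoissonCriticalRitz} and the Ritz reconstruction $w=(\mathrm{Id}-R_\tria)z$, gives $|\nabla w|_{W^{\sigma,q}(\Omega)}\lesssim|\nabla z|_{W^{\sigma,q}(\Omega)}$. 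Each component of $\nabla z$ lies in $\mathbb P_{k-1}(\tria_\star)$. By Lemma~\ref{lem:SparseProjectionError}\ref{itm:Sparsity}, these components vanish on all but at most $M=2(\#\tria_\star-\#\tria)$ elements of $\tria_\star$. Lemma~\ref{lem:inverseEst}\ref{itm:2Bernstein}, applied componentwise on $\tria_\star$, therefore gives
\begin{align*}
  |\nabla z|_{W^{\sigma,q}(\Omega)}\lesssim(\#\tria_\star-\#\tria)^{\alpha}\|\nabla z\|_{L^2(\Omega)}.
\end{align*}
If $\#\tria_\star=\#\tria$, then $z=0$ and $w=0$, so both sides of \eqref{eq:PoissonEstimatorChange} vanish.

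Finally, the equivalence in Lemma~\ref{lem:SparseProjectionError}\ref{itm:Equivalence} gives $\|\nabla z\|_{L^2(\Omega)}\lesssim\|\nabla w\|_{L^2(\Omega)}$. The definition \eqref{eq:PoissonDiscreteDifference} together with the monotonicity of the oscillation gives $\|\nabla w\|_{L^2(\Omega)}\leq\sqrt2\,\delta(\tria,\tria_\star)$. Chaining these estimates proves \eqref{eq:PoissonEstimatorChange}. All constants are uniform over $\meshes$, because every lemma invoked holds uniformly under shape regularity.
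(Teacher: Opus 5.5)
Your proposal is correct and follows the paper's proof step for step. You bound the indicator changes by $\eta^{\rm hom}_{\tria_\star}(w,\cdot)$ and then by $|\nabla w|_{W^{\sigma,q}(\Omega)}$ via Lemmas~\ref{lem:PoissonResidualChange} and~\ref{lem:PoissonResidualExtraction}, pass to the sparse projection error $z$ through Lemma~\ref{lem:fractionalSparseProjection}, apply the sparse Bernstein estimate with $M=2(\#\tria_\star-\#\tria)$, and close with the energy equivalence and $\|\nabla w\|_{L^2(\Omega)}\leq\sqrt2\,\delta(\tria,\tria_\star)$; your separate treatment of $\#\tria_\star=\#\tria$ (which forces $\tria=\tria_\star$ and ensures $M\geq1$) corresponds to the paper's reduction to $\tria<\tria_\star$.
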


\begin{proof}
  The assertion is trivial if $\tria=\tria_\star$, so assume $\tria<\tria_\star$ and set $w\coloneqq u_{\tria_\star}-u_\tria$. Galerkin orthogonality gives \eqref{eq:Ortho}. Let $z\in V(\tria_\star)$ be the sparse projection error of~$w$ from Lemma~\ref{lem:SparseProjectionError}.
  Lemma~\ref{lem:fractionalSparseProjection}, the equivalence and sparsity properties of Lemma~\ref{lem:SparseProjectionError}, and Lemma~\ref{lem:inverseEst}, applied componentwise to $\nabla z$, yield with \eqref{eq:PoissonDiscreteDifference}
  \begin{align*}
    |\nabla w|_{W^{\sigma,q}(\Omega)}
    &\eqsim
    |\nabla z|_{W^{\sigma,q}(\Omega)}
   \lesssim
    (\#\tria_\star-\#\tria)^\alpha
    \|\nabla z\|_{L^2(\Omega)}
    \\
    &\lesssim
    (\#\tria_\star-\#\tria)^\alpha
    \|\nabla w\|_{L^2(\Omega)}
    \leq
    \sqrt2\,
    (\#\tria_\star-\#\tria)^\alpha
    \delta(\tria,\tria_\star).
  \end{align*}
  Combining Lemmas~\ref{lem:PoissonResidualChange} and \ref{lem:PoissonResidualExtraction} gives
  \begin{align*}
    \Big(
      \sum_{T\in\tria\cap\tria_\star}
      |\eta_\tria(T)-\eta_{\tria_\star}(T)|^q
    \Big)^{1/q}
    &\leq
    \Big(
      \sum_{T\in\tria_\star}
      \eta_{\tria_\star}^{\rm hom}(w,T)^q
    \Big)^{1/q}
\lesssim
    |\nabla w|_{W^{\sigma,q}(\Omega)}.
  \end{align*}
  Combining these bounds proves \eqref{eq:PoissonEstimatorChange}.
\end{proof}

\begin{proof}[Proof of Theorem~\ref{thm:PoissonEstimatorConcentration}]
  Lemma~\ref{lem:PoissonEstimatorChange} verifies the estimator-change criterion \eqref{eq:EstimatorChangeCriterion}. Hence Lemma~\ref{lem:EstimatorChangeCriterion} proves estimator concentration. The rate-optimality assertions then follow from Theorem~\ref{thm:ClassOptimalityAllTheta} and the equivalence $\err(\tria)\eqsim\eta_\tria$ established in Section~\ref{subsec:PoissonSetup}.
\end{proof}


\appendix

\section{Fractional stability of the Scott--Zhang projection}\label{app:SZFractionalStability}

Let $\Omega\subset\RR^d$ with $d\geq2$ be a bounded polyhedral Lipschitz domain, and let $\tria\in\meshes$ be a conforming, shape-regular simplicial triangulation. For a fixed polynomial degree $k\in\mathbb N$, let $\Pi_\tria$ denote the Scott--Zhang projection onto the full continuous Lagrange space, with the averaging functionals associated with boundary nodes supported on boundary facets; see \cite{ScottZhang1990}. This operator reproduces affine functions and preserves finite element boundary traces. In particular, it maps $W^{1,q}_0(\Omega)$ into $V(\tria)$.
We start with a localization estimate for the $W^{s,q}$-seminorm. Related localization results in the Hilbert space setting are given in \cite[Thm~4.1]{AinsworthMcLeanTran1999} and \cite[Sec.~3.1]{FeischlFuehrerHeuerKarkulikPraetorius2015}. For completeness, we give the short proof for $0<s<1/q$, based on Grisvard's zero extension result \cite[Corollary~1.4.4.5]{Grisvard1985}.
\begin{lemma}[Localization of the fractional Sobolev seminorm]  \label{lem:localized-Wsq}
  Let $1<q<\infty$ and $0<s< 1/q$, and let $\tria$ be a finite, shape-regular simplicial triangulation of $\Omega$. Then every $g\in L^q(\Omega)$ with $g|_T\in W^{s,q}(T)$ for all $T\in\tria$ belongs to $W^{s,q}(\Omega)$, and
  \begin{align*}
    \abs{g}_{W^{s,q}(\Omega)}^q \lesssim \sum_{T\in\tria} \big(\abs{g}_{W^{s,q}(T)}^q +h_T^{-sq}\norm{g}_{L^q(T)}^q \big).
  \end{align*}
  The hidden constant depends only on $d$, $s$, $q$, and the shape-regularity of $\tria$. The statement extends to vector-valued functions.
\end{lemma}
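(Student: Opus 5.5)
We split the double integral defining $\abs{g}_{W^{s,q}(\Omega)}^q$ into the diagonal part and the off-diagonal part,
\begin{align*}
  \abs{g}_{W^{s,q}(\Omega)}^q
  =\sum_{T\in\tria}\abs{g}_{W^{s,q}(T)}^q
  +\sum_{T\neq T'}\int_T\int_{T'}\frac{|g(x)-g(y)|^q}{|x-y|^{d+sq}}\dy\dx .
\end{align*}
The diagonal terms already appear on the right-hand side, so only the cross terms with $T\neq T'$ need to be controlled. For these we use $|g(x)-g(y)|^q\lesssim|g(x)|^q+|g(y)|^q$ and symmetry. This reduces everything to bounding, for each $T$,
\begin{align*}
  \int_T|g(x)|^q\int_{\Omega\setminus T}\frac{\dy}{|x-y|^{d+sq}}\dx
  \lesssim\int_T|g(x)|^q\,\mathrm{dist}(x,\partial T)^{-sq}\dx .
\end{align*}
Here the inner integral is estimated by integrating over the complement of the ball of radius $\mathrm{dist}(x,\partial T)$ around $x$, which gives a bound $\lesssim \mathrm{dist}(x,\partial T)^{-sq}$ with a constant depending only on $d$ and $sq>0$.

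The main step is the weighted Hardy-type inequality on a single simplex,
\begin{align*}
  \int_T|g(x)|^q\,\mathrm{dist}(x,\partial T)^{-sq}\dx
  \lesssim\abs{g}_{W^{s,q}(T)}^q+h_T^{-sq}\norm{g}_{L^q(T)}^q,
  \qquad 0<s<1/q .
\end{align*}
This is exactly the content of Grisvard's result (Corollary~1.4.4.5 in Grisvard's book): for $sq<1$, the zero extension from a Lipschitz domain is bounded on $W^{s,q}$, equivalently $\mathrm{dist}(\cdot,\partial T)^{-s}g\in L^q(T)$ with norm controlled by $\norm{g}_{W^{s,q}(T)}$. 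We will apply it on the reference simplex $\hat T$, where the full norm $\norm{\hat g}_{W^{s,q}(\hat T)}^q=\abs{\hat g}^q_{W^{s,q}(\hat T)}+\norm{\hat g}^q_{L^q(\hat T)}$ appears. Then we transfer to $T$ by the affine map $F_T$.

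The scaling step works as follows. Under $x=F_T(\hat x)$, shape regularity gives $\mathrm{dist}(x,\partial T)\eqsim h_T\,\mathrm{dist}(\hat x,\partial\hat T)$ and $|x-y|\eqsim h_T|\hat x-\hat y|$. Consequently the left-hand side scales like $h_T^{d-sq}$, the seminorm like $h_T^{d-sq}$, and the $L^q$ norm like $h_T^{d}$. Hence the lower-order term becomes $h_T^{-sq}\norm{g}_{L^q(T)}^q$, as claimed, with constants depending only on $d,s,q$ and shape regularity.

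Summing over $T$ yields the estimate. Finiteness of the right-hand side then shows $g\in W^{s,q}(\Omega)$. The vector-valued case follows componentwise, using that the componentwise and Euclidean-norm versions of the seminorm are equivalent.

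We expect the Hardy/zero-extension step to be the main obstacle. In particular, we must make sure its constant is uniform, which is why it is done once on the reference element. We also note that the restriction $s<1/q$ enters exactly there, and that it is the only place where it is needed.
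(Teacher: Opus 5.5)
Your proof is correct and is essentially the paper's argument. The paper writes $g=\sum_{T}E_T(g|_T)$ with zero extensions $E_T$ and observes that for a.e.\ $(x,y)$ at most two of the differences $E_T(g|_T)(x)-E_T(g|_T)(y)$ are nonzero, which is your diagonal/off-diagonal split with $|g(x)-g(y)|^q\le 2^{q-1}(|g(x)|^q+|g(y)|^q)$; it then applies Grisvard's zero-extension bound on the reference simplex, scaled to $T$. Your detour through $\mathrm{dist}(x,\partial T)^{-sq}$ and the Hardy form is harmless but avoidable, since $\int_T|g(x)|^q\int_{\mathbb{R}^d\setminus T}|x-y|^{-d-sq}\,\mathrm{d}y\,\mathrm{d}x\le\tfrac12\lvert E_T(g|_T)\rvert_{W^{s,q}(\mathbb{R}^d)}^q$ bounds your cross terms directly by the zero-extension seminorm.
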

\begin{proof}
  For $T\in\tria$ and $f\in W^{s,q}(T)$ with $1<q<\infty$ and $0<s< 1/q$, let $E_Tf$ denote its extension by zero to $\RR^d$. Grisvard's zero extension result \cite[Cor.~1.4.4.5]{Grisvard1985}, see also \cite[Thm.~3.19]{ErnGuermond2021}, applied on the reference simplex and scaled to $T$ gives
  \begin{align}
    \abs{E_Tf}_{W^{s,q}(\RR^d)}^q
    &\lesssim \abs{f}_{W^{s,q}(T)}^q + h_T^{-sq}\norm{f}_{L^q(T)}^q.
      \label{eq:grisvard}
  \end{align}
  This estimate holds also for vector-valued functions. Let $g\in L^q(\Omega)$ with $g|_T\in W^{s,q}(T)$ and $g_T \coloneqq E_T(g|_T)$ for all $T\in \tria$. Then we have $g=\sum_{T\in\tria}g_T$ almost everywhere in $\Omega$. For almost every pair $(x,y)\in\RR^d\times\RR^d$, at most two differences $g_T(x)-g_T(y)$ are nonzero. Consequently, using~\eqref{eq:grisvard}, we obtain
  \begin{align*}
    \abs{g}_{W^{s,q}(\Omega)}^q &\leq 2^{q-1}\sum_{T\in\tria} \abs{g_T}_{W^{s,q}(\RR^d)}^q \lesssim \sum_{T\in\tria} \big( \abs{g}_{W^{s,q}(T)}^q + h_T^{-sq}\norm{g}_{L^q(T)}^q \big).
  \end{align*}
  The right-hand side is finite, proving $g \in W^{s,q}(\Omega)$ and the estimate.
\end{proof}

Fractional stability of the trace-preserving Scott--Zhang projection for orders between $1/q$ and $1$ was established by Ciarlet~\cite[Cor.~3.4 and Sec.~4]{Ciarlet2013} for dimensions $d\leq3$. The argument extends to arbitrary fixed dimension, since the required scaled trace estimates, local approximation estimates, and bounded overlap of vertex patches remain valid for shape-regular simplicial triangulations. For Scott--Zhang-type operators based on volume averaging, Faustmann, Melenk, and Parvizi~\cite[Thm.~2.2 and Cor.~3.7]{FaustmannMelenkParvizi2021} obtained higher-order stability results, including endpoint estimates in Besov spaces. The following lemma combines the lower-order result by Ciarlet with a direct proof of $W^{1+s,q}$-stability for the trace-preserving Scott--Zhang projection, based on the localization estimate above.

\begin{lemma}[Fractional trace-preserving Scott--Zhang stability]
  \label{lem:SZFractionalStability}
  Let $1<q<\infty$ and $1/q < t < 1 + 1/q$ with $t \neq 1$.
  Then, we have for all $v\in W^{t,q}(\Omega)$ the bound
  \begin{align}\label{eq:SZFractionalStability}
    \norm{\Pi_\tria v}_{W^{t,q}(\Omega)}
    \lesssim
    \norm{v}_{W^{t,q}(\Omega)}.
  \end{align}
  If $t=1+s$ with $0<s<1/q$, then we have additionally
  \begin{align}\label{eq:SZFractionalGradientStability}
    \abs{\nabla\Pi_\tria v}_{W^{s,q}(\Omega)}
    \lesssim
    \abs{\nabla v}_{W^{s,q}(\Omega)}.
  \end{align}
  The constants depend only on $d$, $k$, $q$, $t$, the shape regularity of $\tria$, and, for the full norm estimate, $\Omega$. They are uniform with respect to the admissible choices of averaging simplices.
  In particular, $\Pi_\tria$ preserves homogeneous Dirichlet traces in both ranges.
\end{lemma}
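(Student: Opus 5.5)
The proof splits along the two ranges of $t$.

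\emph{Low range $1/q<t<1$.} I will not reprove this. It is Ciarlet's result \cite[Cor.~3.4 and Sec.~4]{Ciarlet2013}. That argument rests only on three ingredients: scaled trace inequalities on facets for $W^{t,q}$ with $t>1/q$, local approximation $\norm{v-\Pi_\tria v}_{L^q(T)}\lesssim h_T^{t}\abs{v}_{W^{t,q}(\omega_T)}$, and bounded overlap of patches. All three hold in any dimension for shape-regular simplices, and they are uniform in the choice of averaging facets. I will simply record that these are the only inputs.

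\emph{High range $t=1+s$, $0<s<1/q$.} I will prove \eqref{eq:SZFractionalGradientStability} first and then deduce \eqref{eq:SZFractionalStability}.

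Set $g\coloneqq\nabla\Pi_\tria v$, which is piecewise polynomial. Lemma~\ref{lem:localized-Wsq} then gives
\begin{align*}
\abs{g}_{W^{s,q}(\Omega)}^q\lesssim\sum_{T}\big(\abs{g}_{W^{s,q}(T)}^q+h_T^{-sq}\norm{g}_{L^q(T)}^q\big).
\end{align*}
The key step is to subtract a local affine function on each $T$. Let $\omega_T$ be the patch of elements touching $T$, including the boundary facets used for averaging, and let $p_T$ be affine with $\nabla p_T=\mean{\nabla v}_{\omega_T}$. Since $\Pi_\tria$ reproduces affine functions locally, we have $(\Pi_\tria p_T)|_T=p_T|_T$. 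Hence on $T$
\begin{align*}
g-\mean{\nabla v}_{\omega_T}=\nabla\Pi_\tria(v-p_T).
\end{align*}
Now the two terms are handled as follows.
\begin{itemize}
\item Subtracting the constant $\mean{\nabla v}_{\omega_T}$ does not change $\abs{g}_{W^{s,q}(T)}$. An inverse estimate bounds it by $h_T^{-s}\norm{\nabla\Pi_\tria(v-p_T)}_{L^q(T)}$.
\item Local $W^{1,q}$ stability of Scott--Zhang bounds this by $h_T^{-s}\norm{\nabla(v-p_T)}_{L^q(\omega_T)}$. Here the boundary functionals are handled with a trace inequality.
\item A fractional Poincaré inequality on $\omega_T$ then gives $\lesssim\abs{\nabla v}_{W^{s,q}(\omega_T)}$.
\end{itemize}

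\textbf{Main obstacle: the zero-order term $h_T^{-sq}\norm{g}_{L^q(T)}^q$.} The mean $\mean{\nabla v}_{\omega_T}$ cannot be subtracted there, because $\norm{g}_{L^q(T)}$ is not invariant under constants. The point is that Lemma~\ref{lem:localized-Wsq} is applied to all of $g$, and zero extension does not see constants.

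I will fix this by localizing $g-c$ instead of $g$ for a suitable constant $c$. Write $g-c=\sum_T E_T\big((g-c)|_T\big)$ and control the cross terms between neighbouring elements directly. For $x\in T$, $y\in T'$ we have
\begin{align*}
\abs{g(x)-g(y)}\le\abs{g(x)-c_T}+\abs{c_T-c_{T'}}+\abs{c_{T'}-g(y)},\qquad c_T\coloneqq\mean{\nabla v}_{\omega_T}.
\end{align*}
The pieces are then handled as follows.
\begin{itemize}
\item \textbf{Far-apart pairs.} For $\operatorname{dist}(T,T')\gtrsim h_T$, the first and third pieces give $h_T^{-sq}\norm{g-c_T}_{L^q(T)}^q$ after integrating the kernel.
\item \textbf{Differences of means.} For adjacent patches the overlap yields $\abs{c_T-c_{T'}}\lesssim h^{-d/q}\,h^{s}\abs{\nabla v}_{W^{s,q}(\omega_T\cup\omega_{T'})}$.
\item \textbf{Nearby pairs.} Summing the mean differences over near pairs, with a logarithm-free kernel bound, is controlled by $\abs{\nabla v}^q_{W^{s,q}(\Omega)}$.
\end{itemize}
Equivalently, the direct comparison of $\abs{g}_{W^{s,q}(\Omega)}$ with the piecewise-constant function $\sum_T c_T\mathbf 1_T$ can be made as
\begin{align*}
\abs{\textstyle\sum_T c_T\mathbf 1_T}_{W^{s,q}}\lesssim\abs{\nabla v}_{W^{s,q}},
\end{align*}
which holds because $sq<1$ and the means form a stable averaging. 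I expect this bookkeeping of far-field interactions to be the hardest part. If the direct estimate becomes messy, I will instead cite the corresponding averaging-operator estimate in \cite{FaustmannMelenkParvizi2021}.

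\emph{Remaining assertions.}
\begin{itemize}
\item \textbf{Full-norm estimate.} The full norm \eqref{eq:SZFractionalStability} for $t=1+s$ follows by adding $W^{1,q}$ stability.
\item \textbf{Uniformity.} Uniformity in the averaging choices holds because only local stability constants enter.
\item \textbf{Trace preservation.} Preservation of homogeneous Dirichlet traces is immediate, since the boundary functionals use boundary facets only.
\end{itemize}
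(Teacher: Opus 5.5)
Your low range (citing Ciarlet) and your bound for the elementwise seminorm $\abs{g}_{W^{s,q}(T)}$ are fine and match the paper. The gap is in the zero-order term. You correctly see that applying Lemma~\ref{lem:localized-Wsq} to $g=\nabla\Pi_\tria v$ produces $h_T^{-sq}\norm{g}_{L^q(T)}^q$, which is not invariant under constants. However, your repair is not carried out.

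In the far-pair bullet you estimate only the first and third pieces. The middle piece $\abs{c_T-c_{T'}}$ for non-neighbouring $T,T'$ is left unestimated. Your bound on mean differences covers only adjacent patches, and controlling far pairs needs a further idea: inserting $\nabla v(x)-\nabla v(y)$ for $x\in T$, $y\in T'$. The summary claim $\abs{\sum_T c_T\mathbf 1_T}_{W^{s,q}(\Omega)}\lesssim\abs{\nabla v}_{W^{s,q}(\Omega)}$ is asserted with no argument beyond ``$sq<1$ and stable averaging''. This is the step you yourself flag as the hardest, so as written the proof of \eqref{eq:SZFractionalGradientStability} is incomplete. The fallback citation does not obviously close it either. \cite{FaustmannMelenkParvizi2021} treats volume-averaged Scott--Zhang-type operators, not the facet-averaged trace-preserving one, nor piecewise-constant patch means.

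The missing idea is to compare with $\nabla v$ itself rather than with piecewise constants; then no far-field bookkeeping is needed. The paper applies Lemma~\ref{lem:localized-Wsq} to the error $\nabla e$ with $e\coloneqq v-\Pi_\tria v$. By affine reproduction, $\nabla e|_T=\nabla(v-p_T)-\nabla\Pi_\tria(v-p_T)$. Your own chain (inverse estimate, local gradient stability, fractional Poincar\'e) then bounds both $\abs{\nabla e}_{W^{s,q}(T)}$ and the zero-order term $h_T^{-s}\norm{\nabla e}_{L^q(T)}$ by $\abs{\nabla v}_{W^{s,q}(\omega_T)}$. Bounded overlap of the patches gives $\abs{\nabla e}_{W^{s,q}(\Omega)}\lesssim\abs{\nabla v}_{W^{s,q}(\Omega)}$. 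The triangle inequality $\abs{\nabla\Pi_\tria v}_{W^{s,q}(\Omega)}\le\abs{\nabla v}_{W^{s,q}(\Omega)}+\abs{\nabla e}_{W^{s,q}(\Omega)}$ finishes the proof. The same trick, applied to $\nabla v-\sum_T c_T\mathbf 1_T$ together with fractional Poincar\'e on $\omega_T$, would also prove your claimed piecewise-constant comparison in two lines.
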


\begin{proof}
  For $1/q<t<1$, the global approximation estimate of~\cite[Cor.~3.4 and Sec.~4]{Ciarlet2013}, applied with equal regularity and approximation orders, gives
  \begin{align*}
    \norm{v-\Pi_\tria v}_{W^{t,q}(\Omega)}
    \lesssim
    \abs{v}_{W^{t,q}(\Omega)}.
  \end{align*}
  The triangle inequality proves~\eqref{eq:SZFractionalStability} in this range.
  Set $e\coloneqq v-\Pi_\tria v$. For $T\in\tria$, let $\omega_T$ be the patch of elements sharing a vertex with $T$, and recall that $h_T=|T|^{1/d}$. We choose an affine function $p_T$ with $\nabla p_T=\mean{\nabla v}_{\omega_T} \coloneqq |\omega_T|^{-1}\int_{\omega_T}\nabla v\dx$. Affine reproduction, local gradient stability, a polynomial inverse estimate, and the fractional Poincar\'e inequality yield
  \begin{align}\label{eq:SZLocalFractionalError}
    \begin{aligned}
      \lefteqn{h_T^{-s}\norm{\nabla e}_{L^q(T)} + \abs{\nabla e}_{W^{s,q}(T)}} \qquad &\\
      &\lesssim h_T^{-s}\norm{\nabla (v- p_T)}_{L^q(T)} +\abs{\nabla (v-p_T)}_{W^{s,q}(T)}\\
      &\quad + h_T^{-s}\norm{\nabla \Pi_\tria (v-p_T)}_{L^q(T)} + \abs{\nabla \Pi_\tria(v-p_T)}_{W^{s,q}(T)} \\
      &\lesssim h_T^{-s}\norm{\nabla v-\mean{\nabla v}_{\omega_T}}_{L^q(T)} + \abs{\nabla v}_{W^{s,q}(T)} + h_T^{-s}\norm{\nabla \Pi_\tria (v-p_T)}_{L^q(T)} \\
      &\lesssim \abs{\nabla v}_{W^{s,q}(T)} + h_T^{-s}\norm{\nabla(v-p_T)}_{L^q(\omega_T)} \lesssim \abs{\nabla v}_{W^{s,q}(\omega_T)}.
    \end{aligned}
  \end{align}
  By Lemma~\ref{lem:localized-Wsq} we have
  \begin{align*}
   \abs{\nabla e}_{W^{s,q}(\Omega)}^q &\lesssim \sum_{T\in\tria} \big(\abs{\nabla e}_{W^{s,q}(T)}^q + h_T^{-sq}\norm{\nabla e}_{L^q(T)}^q \big).
  \end{align*}
  This estimate, \eqref{eq:SZLocalFractionalError}, and the bounded overlap of the patches yield
  \begin{align*}
    \abs{\nabla e}_{W^{s,q}(\Omega)}^q
    &\lesssim
      \sum_{T\in\tria}
      \abs{\nabla v}_{W^{s,q}(\omega_T)}^q \lesssim \abs{\nabla v}_{W^{s,q}(\Omega)}^q.
  \end{align*}
  The triangle inequality proves~\eqref{eq:SZFractionalGradientStability}. Combining this estimate with the $W^{1,q}(\Omega)$-stability of $\Pi_\tria$ proves~\eqref{eq:SZFractionalStability} for $t=1+s$.
\end{proof}


\subsection*{Acknowledgment}
This work is part of a longer-term project of the authors on adaptive finite element methods. During the final stage of the present work, an extended interactive discussion between Johannes Storn and OpenAI's ChatGPT, motivated in particular by our recent results on $L^2$ and Ritz projection stability, led to an initial framework for the proof. Since the mathematical ideas developed through a long sequence of questions, suggestions, and revisions, we do not attempt to separate the individual contributions of the human and AI participants to this stage. The authors subsequently verified, corrected, generalized, and substantially rewrote the arguments and take full responsibility for all content in this paper.

\printbibliography
\end{document}